 \newtheorem{thm}{Theorem}[section]
 \newtheorem{corollary}[thm]{Corollary}
 \newtheorem{lemma}[thm]{Lemma}
 \newtheorem{proposition}[thm]{Proposition}
 \theoremstyle{definition}
 \theoremstyle{remark}
 \numberwithin{equation}{section}
\begin{document}

\title[Gromov hyperbolicity of the Kobayashi metric]{Gromov hyperbolicity of the Kobayashi \newline metric on $\mathbb{C}$-convex domains}

\author{Nikolai Nikolov}
\address{Institute of Mathematics and Informatics\\Bulgarian Academy
of Sciences\\ Acad. G. Bonchev 8, 1113 Sofia, Bulgaria
\vspace{0.1cm}
\newline Faculty of Information Sciences\\
State University of Library Studies and Information Technologies\\
Shipchenski prohod 69A, 1574 Sofia,
Bulgaria}

\email{nik@math.bas.bg}

\author{Maria Trybu\l a}

\address{Faculty of Mathematics and Informatics\\Adam Mickiewicz University\\ Umultowska 87, 61-614 Pozna\'{n}, Poland}

\email{maria.h.trybula@gmail.com}

\thanks{The first named author was partially supported by the Bulgarian National Science Fund,
Ministry of Education and Science of Bulgaria under contract DN 12/2.
The second named author was supported by National Center of Science (Poland), grant no. 2013/10/A/ST1/00091.}

\subjclass{32F45, 53C23}

\keywords{Kobayashi distance, Kobayashi metric, Gromov hyperbolicity, weakly linearly convex domain, $\mathbb{C}$-convex domain, quasi-geodesic}

\begin{abstract}
In this paper we study the global geometry of the Kobayashi metric on $\mathbb{C}$-convex domains. We provide new examples of non-Gromov hyperbolic domains in $\mathbb{C}^n$ of many kinds: pseudoconvex and non-pseudo
\newline -convex, bounded and unbounded.
\end{abstract}

\maketitle
\section{Introduction}

The Kobayashi distance, denoted $k_\Omega,$ is a pseudodistance associated to any domain $\Omega,$ which has a number of important properties, for instance, the distance decreasing property.
It is known that for bounded domains the Kobayashi pseudodistance is actually a distance. Further, the distance non-increasing property of the  Kobayashi distance is useful for studying holomorphic maps and the geometry of the Kobayashi distance has played an important role in the proofs of many results in several complex variables (see for instance \cite{Fornaess}).

Generally the Kobayashi metric is not Riemannian, and thus no longer has a curvature. Instead one can consider a coarser notion of non-positive curvature from geometric group theory that has the origins in the fundamental work of Alexandrov (see a survey article \cite{Alexandrov}).
Alexandrov gave several equivalent definitions of what it means for a metric space to have curvature bounded from above by a real number $\kappa.$ It was observed by M. Gromov that one of them, given by the co-called CAT($\kappa$) inequality, allows one to faithfully reflect the same concept in a much wider setting - that of geodesic metric spaces (see \cite{Gromov}). Motived by this Gromov introduced the notion of {\it $\delta$-hyperbolicity}, also known as {\it Gromov hyperbolicity}.

Gromov hyperbolic metric spaces have a number of important properties. We shall mention here only a few of them. For instance, it is known that in a Gromov hyperbolic space every quasi-geodesic is within a bounded distance of an actual geodesic (cf. \cite[Chapter III.H, Theorem 1.7]{Bridson}). This can be very helpful in many situations because it is often easy to construct quasi-geodesics but difficult to find actual geodesics. Moreover, every quasi-isometry $f:\Omega_1\rightarrow \Omega_2$ between Gromov hyperbolic spaces have a continuous extension to natural compactifications of $\Omega_1$ and $\Omega_2$ (cf. \cite[Chapter III.H, Theorem, 3.9]{Bridson}).

In \cite{NTT}, Thomas and the authors started considering the sensitivity of Gromov hyperbolicity to removing some part from a domain. Roughly speaking, they observed that under certain conditions on $\Omega_2,$ the space $(\Omega_1\setminus\overline{\Omega}_2,k_{\Omega_1\setminus\overline{\Omega}_2})$ is hyperbolic if and only if  $(\Omega_1,k_{\Omega_1})$ is hyperbolic.
\begin{proposition}\textup{(\cite[Proposition 6]{NTT})}\label{Pascal}
Let $\Omega$ be a bounded domain in $\mathbb{C}^n,\,n\geq 2.$ Assume that $K$ is a linearly convex compact subset of $\Omega.$
Then $\Omega\setminus K$ is a domain such that $k_{\Omega\setminus K}$ is
quasi-isometrically equivalent to $k_{\Omega}|_{(\Omega\setminus K)\times(\Omega\setminus K)}.$

In particular, if $(\Omega,k_\Omega)$ is Gromov hyperbolic, then so is $(\Omega\setminus K,k_{\Omega\setminus K}).$
\end{proposition}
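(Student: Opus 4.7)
The inequality $k_\Omega(p, q) \le k_{\Omega \setminus K}(p, q)$ is immediate from the distance-decreasing property of the Kobayashi pseudodistance applied to the inclusion $\Omega \setminus K \hookrightarrow \Omega$. The content of the statement is therefore the reverse quasi-isometric inequality
\begin{equation*}
k_{\Omega \setminus K}(p, q) \le A\,k_\Omega(p, q) + B
\end{equation*}
for some constants $A \ge 1$, $B \ge 0$ depending only on $\Omega$ and $K$.

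My plan is to combine two ingredients. The first is an infinitesimal \emph{bulk comparison} away from $K$. Fix $\varepsilon > 0$ so that the closed $\varepsilon$-tube $K_\varepsilon := \{z : d(z, K) \le \varepsilon\}$ is relatively compact in $\Omega$, and prove the uniform bound $\kappa_{\Omega \setminus K}(z, v) \le M_\varepsilon\, \kappa_\Omega(z, v)$ for all $z \in \Omega \setminus K_\varepsilon$ and $v \in \mathbb{C}^n$. This follows from the Schwarz lemma: any analytic disc $\phi : \mathbb{D} \to \Omega$ with $\phi(0) = z$ satisfies $|\phi(w) - z| \le 2\,\mathrm{diam}(\Omega)\,|w|$ by boundedness of $\Omega$, and rescaling produces an analytic disc into $B(z, \varepsilon) \subset \Omega \setminus K$ at the cost of a factor $2\,\mathrm{diam}(\Omega)/\varepsilon$.

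The second ingredient is a \emph{bounded detour} away from $K$, where linear convexity enters essentially. For each $p \in K_\varepsilon \setminus K$, linear convexity supplies a complex hyperplane $H_p \ni p$ with $H_p \cap K = \emptyset$; the affine slice $V_p := \Omega \cap H_p$ is a complex subdomain of $\Omega \setminus K$, and distance decreasing gives $k_{\Omega \setminus K}(p, z) \le k_{V_p}(p, z)$ for $z \in V_p$. The aim is to produce a uniform $C_1$ such that some $p^\ast \in V_p \cap \partial K_\varepsilon$ satisfies $k_{V_p}(p, p^\ast) \le C_1$. Given such detours and the bulk comparison, the proof is assembled as follows: detour $p$ and $q$ to $p^\ast, q^\ast \in \partial K_\varepsilon$ at total cost $\le 2C_1$; connect $p^\ast, q^\ast$ by a $k_\Omega$-near-geodesic; replace each excursion of that near-geodesic into $K_\varepsilon$ by a path on the compact set $\partial K_\varepsilon$, of $k_{\Omega \setminus K}$-length at most $C_2 := \operatorname{diam}_{k_{\Omega\setminus K}}(\partial K_\varepsilon) < \infty$; and apply the bulk comparison to the portions lying in $\Omega \setminus K_\varepsilon$.

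I expect the main obstacle to be the uniform detour constant $C_1$. The hyperplane $H_p$ is not canonical and may degenerate as $p \to K$, so extracting a single $C_1$ requires a normal-families argument exploiting the compactness of $\overline{K_\varepsilon \setminus K}$ together with the fact that the admissible hyperplanes form a compact family in the affine Grassmannian after including limiting hyperplanes tangent to $K$. A secondary technical point is bounding the number $N$ of distinct excursions of a near-geodesic into $K_\varepsilon$ by a linear function of $k_\Omega(p, q)$, which is needed so that the total detour bill $N\,C_2$ fits inside the additive error $B$.
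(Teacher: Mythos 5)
The paper itself does not prove this proposition --- it is quoted verbatim from \cite{NTT} --- so your argument can only be judged on its own terms. Your skeleton (trivial lower bound; a localization estimate $\kappa_{\Omega\setminus K}\le M_\varepsilon\,\kappa_\Omega$ off the tube $K_\varepsilon$; a bounded detour through a hyperplane slice for points near $K$; surgery on almost-length-minimizing curves) is the right architecture, and the bulk comparison is correct as stated: the rescaled disc lands in $\mathbb{B}(z,\varepsilon)\cap\Omega\subset\Omega\setminus K$ whenever $d(z,K)>\varepsilon$, so there is no problem near $\partial\Omega$. Two of the remaining steps, however, have genuine gaps.

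First, the detour constant $C_1$. The normal-families argument you propose cannot close the gap you yourself flag: if $p_j\to p_\infty\in K$, the admissible hyperplanes $H_{p_j}$ do subconverge in the Grassmannian, but the limit $H_\infty$ passes through $p_\infty\in K$ and hence meets $K$; the limiting slice is not a subdomain of $\Omega\setminus K$, so it carries no finite detour that could be transferred back to the $H_{p_j}$. What works is quantitative rather than limiting: set $\delta_0:=d(K_\varepsilon,\partial\Omega)>0$ and let $V_p$ be the component of $p$ in $H_p\cap\Omega$. Every $x\in V_p\cap K_\varepsilon$ satisfies $H_p\cap\mathbb{B}(x,\delta_0)\subset V_p\subset\Omega\setminus K$, whence $\kappa_{\Omega\setminus K}(x;v)\le\lVert v\rVert/\delta_0$ for $v$ tangent to $H_p$; moreover $V_p\not\subset K_\varepsilon$ (otherwise $\overline{V_p}$ would be compact in $\Omega$ while its boundary relative to $H_p$ must lie in $\partial\Omega$); and a connected, $\delta_0$-thick open subset of a bounded set has intrinsic Euclidean diameter bounded by a packing constant. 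Chaining these facts produces a path from $p$ out of $K_\varepsilon$ of uniformly bounded $\kappa_{\Omega\setminus K}$-length, which is the uniform $C_1$ you need. Second, the excursion count: a curve of small $k_\Omega$-length can cross $\partial K_\varepsilon$ infinitely often, so $N$ is \emph{not} linearly bounded by $k_\Omega(p,q)$. The fix is to perform a single surgery between the first entry into and the last exit from $\overline{K_\varepsilon}$: the two remaining arcs lie entirely in $\Omega\setminus K_\varepsilon$, where the bulk comparison applies, and the one replacement costs at most $\mathrm{diam}_{k_{\Omega\setminus K}}(\partial K_\varepsilon)$. Finally, you never address the assertion that $\Omega\setminus K$ is a domain, i.e.\ connected --- without this even the finiteness of $\mathrm{diam}_{k_{\Omega\setminus K}}(\partial K_\varepsilon)$ is unavailable; it genuinely uses $n\ge 2$ and the linear convexity of $K$, and can be obtained from the same slice-plus-thickness argument.
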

The situation when we cut out a non-compact subset is completely different.
\begin{thm}\textup{(\cite[Theorem 3.1]{paris})}\label{paris them}
Let $\Omega$ be a bounded convex open set in $\mathbb{C}^n$ and $S$ be a complex affine hyperplane such that $\Omega\cap S$ is not empty. Then $(\Omega\setminus S,k_{\Omega\setminus S})$ is not Gromov hyperbolic.
\end{thm}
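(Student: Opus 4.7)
The plan is to show that $(\Omega\setminus S,k_{\Omega\setminus S})$ contains a quasi-isometrically embedded ``quasi-flat quadrant,'' whose presence contradicts Gromov hyperbolicity. After a complex affine change of coordinates I may assume $S=\{z_1=0\}$ and $0\in\Omega\cap S$. Since $\Omega$ is a bounded convex open set and $0\in\Omega\cap S$, there exist $r>0$ and a bounded convex open set $Q\subset\mathbb{C}^{n-1}$ with $0\in Q$ such that the polydisc $\Delta_r\times Q$ is contained in $\Omega$; in particular the slit polydisc $P^*:=\Delta_r^*\times Q$ lies inside $\Omega\setminus S$.

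On $P^*$ I seek a two-sided comparison between $k_{\Omega\setminus S}$ and the product metric $k_{P^*}=\max(k_{\Delta_r^*},k_Q)$. The upper bound $k_{\Omega\setminus S}\leq k_{P^*}$ is immediate from the distance-decreasing property applied to the inclusion $P^*\hookrightarrow\Omega\setminus S$ together with the product formula for the Kobayashi distance. For a lower bound I would first exploit the two distance-decreasing projections $\sigma(z_1,w)=z_1$ and $\pi(z_1,w)=w$, obtaining $k_{\Omega\setminus S}\ge\max(k_{\sigma(\Omega)\setminus\{0\}}(\zeta,\zeta'),k_{\pi(\Omega)}(w,w'))$. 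Since $\sigma(\Omega)$ is a bounded convex domain in $\mathbb{C}$ through $0$, the first term is comparable (up to an additive constant) to $k_{\Delta_r^*}(\zeta,\zeta')$ for $\zeta,\zeta'$ near the puncture. The real work is to upgrade the second term to a bound comparable to $k_{Q'}(w,w')$ on some $Q'\Subset Q$; I would attempt this through a localization of the Kobayashi distance on convex domains, using Lempert-type complex geodesics confined to an appropriate sector of $\Omega\setminus S$.

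Granting this two-sided comparison, fix quasi-geodesic rays $\gamma_1\colon[0,\infty)\to\Delta_r^*$ converging to the puncture and $\gamma_2\colon[0,\infty)\to Q'$ converging to $\partial Q'$. The map $\Phi(s,t)=(\gamma_1(s),\gamma_2(t))$ is then a quasi-isometric embedding of the quadrant $[0,\infty)^2$, with the $\ell^\infty$-metric, into $(\Omega\setminus S,k_{\Omega\setminus S})$. For each $L>0$ consider the geodesic triangle with vertices $\Phi(0,0),\Phi(L,0),\Phi(0,L)$: all three sides have length $\asymp L$, and any geodesic joining the two ``far'' vertices stays at distance $\asymp L/2$ from the other two sides. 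Letting $L\to\infty$, this violates the $\delta$-thin triangle condition for every $\delta\geq 0$, so $(\Omega\setminus S,k_{\Omega\setminus S})$ cannot be Gromov hyperbolic.

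The main obstacle is the lower bound in the $Q$-direction: the projection $\pi$ gives a quantity comparable only to the Kobayashi distance of the possibly much larger shadow $\pi(\Omega)$ rather than of $Q$ itself, so the trivial projection estimate is insufficient. A careful localization of $k_{\Omega\setminus S}$ near $\Omega\cap S$, based on convexity of $\Omega$ and on the behavior of complex geodesics parallel to $S$, seems unavoidable and should form the technical heart of the proof.
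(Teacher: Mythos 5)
The overall strategy---exhibiting two independent directions to infinity near the slit and deriving arbitrarily fat quasi-geodesic triangles---is the right one, and it is essentially what the paper does: its proof of the more general Theorem \ref{paris we} builds quasi-geodesic rays from a point $p$ off the slit toward $0\in\Omega\cap S$ (the punctured-disc direction) and toward a point $\zeta\in\partial\Omega\cap S$, and shows the resulting triangles are not $M$-thin for large parameters. However, your second direction is misaimed, and the comparison you defer to ``the technical heart of the proof'' is in fact false, not merely unproved. If $Q'\Subset Q$ and $\Delta_r\times Q\subset\Omega$, then for any fixed $\zeta_1\in\Delta_r^*$ the slice $\{\zeta_1\}\times Q'$ is relatively compact in $\Omega\setminus S$, so $k_{\Omega\setminus S}\big((\zeta_1,w),(\zeta_1,w')\big)\le k_{\Delta_r^*\times Q}\big((\zeta_1,w),(\zeta_1,w')\big)\le k_Q(w,w')$ remains bounded for $w,w'\in Q'$, whereas $k_{Q'}(w,w')\to\infty$ as the points approach $\partial Q'$. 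Hence no lower bound of the form $k_{\Omega\setminus S}\gtrsim k_{Q'}$ can hold, your ray $\gamma_2$ has bounded image in $(\Omega\setminus S,k_{\Omega\setminus S})$, and $\Phi$ is not a quasi-isometric embedding of the quadrant; the final triangle argument then collapses. Taking $Q'=Q$ does not repair this, since points of $\{\zeta_1\}\times\partial Q$ with $\zeta_1$ small are still interior to $\Omega$.

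The fix is to aim the second ray at a boundary point of $\Omega$ lying on $S$: choose $\zeta\in\partial\Omega\cap S$ and parametrize $[p,\zeta)$ by $u\mapsto\zeta+e^{-2u}(p-\zeta)$. The required lower bound in that direction comes not from a product comparison but from projecting onto a half-plane determined by a supporting hyperplane at $\zeta$ (for convex $\Omega$), or equivalently from Proposition \ref{oszacowanie Kobayashi}, which yields $k_\Omega(p^s,p^t)\ge\tfrac12|s-t|$ directly; the matching upper bound follows by integrating the Kobayashi metric along the segment. Your treatment of the first direction (toward the puncture, controlled by $k_{\mathbb{D}_*}$ as in Lemma \ref{punctured disc}) is sound, and with the second ray redirected to $\partial\Omega\cap S$ your quadrant/fat-triangle endgame does go through.
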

On the other hand, if $A$ is relatively closed in $\Omega,$ and $A$ is negligible with respect to the $(2n-2)$-dimensional Hausdorff measure, then
$$k_{\Omega\setminus A}=k_{\Omega}|_{(\Omega\setminus A)\times (\Omega\setminus A)}$$
(cf. \cite[Theorem 3.4.2]{Jarnicki}).
Hence, the case considered by Haggui and Chrih is the essential one.
One of our goal is to refine the construction used in \cite{paris}, and show that what is really needed in Theorem \ref{paris them} is a $\mathbb{C}$-convexity.
\begin{thm}\label{paris we}
Let $\Omega\subset\mathbb{C}^n$ be a bounded $\mathbb{C}$-convex domain and $S$ a complex affine hyperplane such that $\Omega\cap S$ is not empty. Then $(\Omega\setminus S,k_{\Omega\setminus S})$ is not Gromov hyperbolic.
\end{thm}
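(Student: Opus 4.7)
The plan is to extend the argument of \cite{paris} from linearly convex to $\mathbb{C}$-convex bounded domains. The guiding intuition is that removing a hyperplane $S$ from $\Omega$ creates a cusp-like region near $S\cap\Omega$ whose Kobayashi geometry is locally modelled on an $L^\infty$ product of a punctured disk (in the direction transverse to $S$) with the slice $\Omega\cap S$; since such products of two non-trivial Gromov hyperbolic spaces are generically not Gromov hyperbolic, this should yield non-hyperbolicity of $\Omega\setminus S$.

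\textbf{Setup.} Fix $p_0\in\Omega\cap S$. After a complex affine change of coordinates we may assume $p_0=0$ and $S=\{z_1=0\}$, and write $z=(z_1,z')\in\mathbb{C}\times\mathbb{C}^{n-1}$. By $\mathbb{C}$-convexity, the one-dimensional slice $D:=\Omega\cap(\mathbb{C}\times\{0\})$ is simply connected, so $D^*:=D\setminus\{0\}$ is biholomorphic to a punctured disk, and $\Omega\cap S\subset\mathbb{C}^{n-1}$ is a bounded $\mathbb{C}$-convex domain. Both slices embed holomorphically into $\Omega\setminus S$, and the distance-decreasing property gives immediate upper bounds on $k_{\Omega\setminus S}$ in terms of the Poincar\'e metric of $D^*$ and the Kobayashi metric of $\Omega\cap S$.

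\textbf{Test configurations and main obstacle.} Following \cite{paris}, the plan is to produce a sequence of four-tuples in $\Omega\setminus S$, for instance
\[
A_k=(\varepsilon_k,0),\quad B_k=(-\varepsilon_k,0),\quad C_k=(\varepsilon_k,w_k),\quad D_k=(-\varepsilon_k,w_k),
\]
with $\varepsilon_k\to 0$ and $w_k\in\Omega\cap S$ chosen so that $k_{\Omega\cap S}(0,w_k)$ is unbounded, and then to verify that the largest of the three symmetric sums $k(A_k,B_k)+k(C_k,D_k)$, $k(A_k,C_k)+k(B_k,D_k)$, $k(A_k,D_k)+k(B_k,C_k)$ exceeds the second largest by an amount going to infinity in $k$, which is exactly the negation of $\delta$-hyperbolicity for every fixed $\delta$. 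The upper bounds for these sums come from the holomorphic slicing above. The matching lower bounds are the technical heart and the main obstacle: in the convex case of \cite{paris} the linear projections onto $D$ and onto $\Omega\cap S$ are genuine holomorphic retractions of $\Omega$, and distance-decreasing delivers lower bounds of the correct order. For merely $\mathbb{C}$-convex $\Omega$ no such linear retractions need exist; the refinement has to replace them by constructions built from the supporting complex-hyperplane structure (through every $q\in\partial\Omega$ passes a complex hyperplane disjoint from $\Omega$), combined with Kobayashi-metric estimates of Nikolov--Pflug--Zwonek type giving lower bounds on $k_\Omega$ in terms of the distance to such supporting hyperplanes. Making these estimates sharp enough to produce the asymptotic imbalance in the four-point inequality is where the bulk of the work should be concentrated.
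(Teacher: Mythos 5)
Your proposal is a plan rather than a proof, and it has two concrete problems. First, the four-point configuration you propose cannot violate the four-point condition. With $A_k=(\varepsilon_k,0)$, $B_k=(-\varepsilon_k,0)$, $C_k=(\varepsilon_k,w_k)$, $D_k=(-\varepsilon_k,w_k)$ you have $k(A_k,B_k)\to 0$ and $k(C_k,D_k)\to 0$ (the points $\pm\varepsilon_k$ become asymptotically close in the punctured-disk slice, since $k_{\mathbb{D}_*}(\varepsilon,-\varepsilon)\to 0$ as $\varepsilon\to 0$). The triangle inequality then gives
$$\big|\,[k(A_k,C_k)+k(B_k,D_k)]-[k(A_k,D_k)+k(B_k,C_k)]\,\big|\ \leq\ k(A_k,B_k)+k(C_k,D_k)\ \longrightarrow\ 0,$$
so the largest and second-largest of the three symmetric sums differ by $o(1)$, which is the opposite of what you need. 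Nor does the obvious "four corners of a large rectangle" repair work directly: the relevant local model is an $L^\infty$-type product (the Kobayashi distance of a product is the maximum of the factor distances), and for such products the diagonal of a rectangle equals the longer side, so again the two largest sums coincide. Detecting non-hyperbolicity of $L^\infty$ products via the four-point inequality requires a genuinely more delicate configuration, which you do not exhibit. Second, you explicitly defer the lower bounds on $k_{\Omega\setminus S}$ ("the technical heart") to unspecified future work; since the whole point of passing from convex to $\mathbb{C}$-convex is precisely that the linear retractions of the convex case are unavailable, leaving this out means the proposal contains no actual argument for the hard half of every estimate.

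For comparison, the paper avoids the four-point condition entirely. It builds explicit $(A,B)$-quasi-geodesics with uniform constants: radial segments $[p,\zeta)$ toward boundary points of a large convex set $\omega\subset\Omega$ produced by Lemma \ref{do wniosku} (the "every weakly linearly convex domain contains a big convex hull of discs" observation), the segment $[p,0)$ toward the deleted hyperplane parametrized through the holomorphic covering of the punctured disk (Lemma \ref{punctured disc}), with lower bounds supplied by the Nikolov--Pflug--Zwonek/Nikolov--Trybu\l a estimate for $\mathbb{C}$-convex domains (Proposition \ref{oszacowanie Kobayashi}) and by projection onto a punctured disk (Lemma \ref{podstawowe}). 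These are assembled into a family of quasi-geodesic triangles, and the $k$-finite compactness of $\Omega\setminus S$ (Proposition \ref{proposition weak linear}) forces a marked point to drift to infinite distance from the opposite sides, contradicting the stability of quasi-geodesics in Gromov hyperbolic spaces (Proposition \ref{prop Gromov}). If you want to salvage your route, you would need both a correct violating four-tuple adapted to the $\max$-product geometry and quantitative two-sided estimates of the type the paper proves; as written, the proposal establishes neither.
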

Observe that $\Omega\setminus S$ is generally not $\mathbb{C}$-convex, but it is still very close to $\mathbb{C}$-convex domains. We will show that a slight modification of the proof of Theorem \ref{paris we} allows to carry on the cutting procedure, and the resulting domain is not Gromov hyperbolic as well.

\begin{corollary}\label{finite weakly}
Let $\Omega\subset\mathbb{C}^n$ be a bounded $\mathbb{C}$-convex domain in $\mathbb{C}^n$ and $\mathcal{S}\not=\varnothing$ be a finite family of complex affine hyperplanes intersecting $\Omega.$ Then $(\Omega\setminus\,\bigcup\mathcal{S},k_{\Omega\setminus\,\bigcup\mathcal{S}})$ is not Gromov hyperbolic.
\end{corollary}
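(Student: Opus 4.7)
The plan is to localise the construction of Theorem \ref{paris we} near a single hyperplane of $\mathcal{S}$, chosen so that all the other hyperplanes are invisible from a small neighbourhood of the base point. Fix any $S_0\in\mathcal{S}$ and set $\mathcal{S}'=\mathcal{S}\setminus\{S_0\}$. Since $\mathcal{S}'$ is finite and each $S\in\mathcal{S}'$ meets $S_0$ (if at all) in a complex affine subspace of $S_0$ of complex codimension at least one, the set $(\Omega\cap S_0)\setminus\bigcup_{S\in\mathcal{S}'}S$ is open and dense in $\Omega\cap S_0$. Pick a point $p$ in it and a ball $V\Subset\Omega$ centred at $p$ small enough that $\overline{V}\cap S=\varnothing$ for every $S\in\mathcal{S}'$. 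Then
$$V\cap(\Omega\setminus S_0)=V\cap\bigl(\Omega\setminus\textstyle\bigcup\mathcal{S}\bigr),$$
so the two domains coincide inside $V$.

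Next I would inspect the proof of Theorem \ref{paris we} and verify that the failure of the Gromov four-point inequality for $k_{\Omega\setminus S_0}$ is produced by a construction that is local at a chosen boundary point of $\Omega\setminus S_0$ lying in $S_0\cap\Omega$: sequences of points approaching that boundary point, together with analytic discs providing upper bounds on their Kobayashi distances. Taking the chosen boundary point to be $p$ and shrinking the construction if necessary, one may arrange that every point of the witnessing configuration and every analytic disc appearing in the upper-bound estimates has image contained in $V$.

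Both sides of the comparison then transfer from $k_{\Omega\setminus S_0}$ to $k_{\Omega\setminus\bigcup\mathcal{S}}$ with no loss. Monotonicity of the Kobayashi pseudodistance gives the lower bounds: since $\Omega\setminus\bigcup\mathcal{S}\subset\Omega\setminus S_0$,
$$k_{\Omega\setminus\bigcup\mathcal{S}}\ \ge\ k_{\Omega\setminus S_0}.$$
The upper bounds are also preserved, because the analytic discs (or chains of such) that realise them for $k_{\Omega\setminus S_0}$ have image in $V$ and are therefore equally admissible for $\Omega\setminus\bigcup\mathcal{S}$. Consequently the same sequences witness a failure of the four-point inequality with arbitrarily large defect for $k_{\Omega\setminus\bigcup\mathcal{S}}$, so $(\Omega\setminus\bigcup\mathcal{S},k_{\Omega\setminus\bigcup\mathcal{S}})$ is not Gromov hyperbolic.

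The main obstacle is this localisation step: one must check that the entire argument of Theorem \ref{paris we}, and in particular every analytic disc used in the upper-bound estimate, can be arranged inside an arbitrarily small neighbourhood of the chosen boundary point. This is precisely where the $\mathbb{C}$-convex geometry of $\Omega$ at $p$ enters; granted this, no further work is required.
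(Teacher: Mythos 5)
There is a genuine gap, and it sits exactly at the step you flag as ``the main obstacle'': the construction in the proof of Theorem \ref{paris we} cannot be confined to a ball $V\Subset\Omega$ centred at an \emph{interior} point $p$ of $S_0\cap\Omega$. The witnessing quasi-geodesic triangles in that proof are global objects: one family of sides runs out to points $p^T\to\zeta$ with $\zeta\in S\cap\partial\Omega$ a boundary point of $\Omega$ itself, another side is the segment $[p,0)$ into the puncture, and the failure of thinness is detected via the $k$-finite compactness of $\Omega\setminus S$ (Proposition \ref{proposition weak linear}), i.e.\ by letting points escape to $\partial(\Omega\setminus S)$. Worse, no localisation at an interior point of $S_0\cap\Omega$ can succeed even in principle. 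Write $S_0=\{z_n=0\}$ and take $V=B\times\mathbb{D}(\epsilon)$ around $p$. For $z,w\in V\setminus S_0$ one gets
$$k_{\Omega\setminus\bigcup\mathcal{S}}(z,w)\;=\;\tfrac12\Bigl|\,\log\log\tfrac{d}{|z_n|}-\log\log\tfrac{d}{|w_n|}\,\Bigr|+O(1),$$
the upper bound coming from the vertical punctured disc $\{z'\}\times\mathbb{D}(\epsilon)_*$ together with a horizontal path whose Kobayashi length is bounded \emph{independently of the distance to} $S_0$ (horizontal complex lines miss $S_0$ and stay uniformly far from $\partial\Omega$), and the lower bound from projecting onto the $z_n$-coordinate as in Lemma \ref{podstawowe}. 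Hence $(V\setminus S_0,\,k_{\Omega\setminus\bigcup\mathcal{S}})$ lies within bounded distance of a single ray into the puncture and satisfies the thin-triangle condition with a uniform constant: no configuration witnessing non-hyperbolicity exists inside $V$. The non-hyperbolicity of $\Omega\setminus S_0$ is produced by a rank-two quasi-flat spanned by the puncture direction \emph{and} a direction towards $S_0\cap\partial\Omega$; the second direction is invisible in your ball. (This is also why the genuinely local version, Theorem \ref{thm strictly}, is anchored at a boundary point of $\Omega$ and needs extra hypotheses.)

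The repair is a sandwich rather than a localisation, and it is what the paper does. After a linear change of coordinates arrange that $\mathbb{B}(r)\times\mathbb{D}(r)$ avoids every hyperplane of $\mathcal{S}$ except $S:=S_0=\{z_n=0\}$ and that some $\zeta\in S\cap\partial\Omega$ satisfies $\lVert\zeta\rVert=r$. Lemma \ref{do wniosku} then provides a convex set $\tilde\Omega$ (a scaled copy of $\{\lVert z'\rVert+|z_n|<1\}$) with
$$\tilde\Omega\setminus S\;\subset\;\Omega\setminus\bigcup\mathcal{S}\;\subset\;\Omega\setminus S,\qquad 0\in\tilde\Omega\cap S,\qquad \zeta\in\partial\tilde\Omega\cap\partial\Omega\cap S.$$
Running the construction of Theorem \ref{paris we} with $\omega$ replaced by $\tilde\Omega$ gives the upper (quasi-geodesic) estimates inside $\tilde\Omega\setminus S\subset\Omega\setminus\bigcup\mathcal{S}$, while your monotonicity inequality $k_{\Omega\setminus\bigcup\mathcal{S}}\geq k_{\Omega\setminus S}\geq k_\Omega$ supplies the lower ones. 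The essential point you are missing is that the intermediate region must stretch from an interior point of $S\cap\Omega$ all the way out to a boundary point of $\Omega$ lying on $S$, not sit inside a compact subset of $\Omega$.
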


Corollary \ref{finite weakly} applies to convex domains also, which is not so obvious if one wants to deduce it directly from Theorem \ref{paris them}.

\begin{corollary}
Let $\Omega\subset\mathbb{C}^n$ be a bounded convex domain and $\mathcal{S}\not=\varnothing$ a finite family of complex affine hyperplanes intersecting $\Omega.$ Then $(\Omega\setminus\,\bigcup\mathcal{S},k_{\Omega\setminus\,\bigcup\mathcal{S}})$ is not Gromov hyperbolic.
\end{corollary}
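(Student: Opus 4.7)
The plan is a one-line reduction: this corollary is the specialization of Corollary \ref{finite weakly} from $\mathbb{C}$-convex to convex domains, so it suffices to verify that every bounded convex domain in $\mathbb{C}^n$ is $\mathbb{C}$-convex. Once that is in hand, the hypotheses of Corollary \ref{finite weakly} are met verbatim and the conclusion follows.

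The verification is standard. Recall that $\Omega$ is $\mathbb{C}$-convex when, for every complex affine line $\ell$, the slice $\Omega\cap\ell$ is connected and simply connected (the empty case being vacuous). If $\Omega$ is convex in the ordinary real sense, then for any complex line $\ell$ the slice $\Omega\cap\ell$ is the intersection of a convex set with a real affine $2$-plane, hence an open convex subset of $\ell\cong\mathbb{R}^2$. Such a set is either empty or contractible; in particular it is connected and simply connected. So $\Omega$ is $\mathbb{C}$-convex, and Corollary \ref{finite weakly} applies with the same $\mathcal{S}$, yielding that $(\Omega\setminus\bigcup\mathcal{S},k_{\Omega\setminus\bigcup\mathcal{S}})$ is not Gromov hyperbolic.

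There is no serious obstacle, and the proof is essentially immediate. The point of stating the corollary separately is the one flagged in the text preceding it: a direct iterative deduction from Theorem \ref{paris them} is not available, because after removing one hyperplane $S_1$ the domain $\Omega\setminus S_1$ is no longer convex, so Theorem \ref{paris them} cannot be fed back into itself to handle a second hyperplane $S_2$. The strengthening to $\mathbb{C}$-convexity in Theorem \ref{paris we} is precisely what makes the one-shot argument of Corollary \ref{finite weakly} go through, and specializing it back to the convex setting is then free.
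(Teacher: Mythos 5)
Your proof is correct and matches the paper's intended argument: the corollary is stated as an immediate specialization of Corollary \ref{finite weakly}, using the standard fact that a convex domain is $\mathbb{C}$-convex (every slice by a complex line is an open convex, hence simply connected, subset of $\mathbb{C}$). Your closing remark about why the deduction cannot be iterated from Theorem \ref{paris them} is exactly the point the authors make in the sentence preceding the corollary.
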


The key ingredient in the proof of Theorem \ref{paris we} is that every weakly linearly convex domain (and so $\mathbb{C}$-convex) contains a "sufficiently" big convex set. This simple observation allows us to apply the construction of quasi-triangles considered in \cite{paris}. However, Theorem \ref{paris we} does not follow directly from \cite{paris}.

On the other hand, in view of the fact that there are the Riemann Singularity Removable Theorems for bounded holomorphic functions and square integrable holomorphic functions, the phenomena described above do not hold for the Bergman and the Carath\'{e}odory distances.

\begin{proposition}
Let $\Omega\subset\mathbb{C}^n$ be a bounded domain and $S$ a complex affine hyperplane such that $\Omega\cap S\not=\varnothing.$ Then
$(\Omega,d_\Omega)$ is Gromov hyperbolic if and only if $(\Omega\setminus S,d_{\Omega\setminus S})$ is Gromov hyperbolic, where $d_{\Omega}$ denotes the Bergman or the Carath\'{e}odory distance on $\Omega.$
\end{proposition}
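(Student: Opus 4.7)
The plan is to show that neither the Carath\'eodory nor the Bergman distance is affected by removing $S$, so that the Gromov hyperbolicity of $(\Omega,d_\Omega)$ transfers directly to and from that of the dense subspace $(\Omega\setminus S,d_{\Omega\setminus S})$. Since $S$ has real codimension two, $\Omega\setminus S$ is dense in $\Omega$, and both distances are continuous on $\Omega\times\Omega$; therefore the Gromov four-point inequality on $\Omega$ is equivalent to its restriction to $\Omega\setminus S$, so it suffices to establish $d_{\Omega\setminus S}=d_\Omega|_{(\Omega\setminus S)\times(\Omega\setminus S)}$ for $d$ equal to either $c$ or $b$.

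For the Carath\'eodory distance I would invoke the classical Riemann removable singularity theorem: every bounded holomorphic function on $\Omega\setminus S$ extends uniquely to a bounded holomorphic function on $\Omega$ with the same supremum norm. Hence restriction gives a bijection between $\mathcal{O}(\Omega,\mathbb{D})$ and $\mathcal{O}(\Omega\setminus S,\mathbb{D})$, and unwinding the definition of $c$ yields $c_{\Omega\setminus S}(z,w)=c_\Omega(z,w)$ for all $z,w\in\Omega\setminus S$.

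For the Bergman distance I would use the $L^2$ version of the same phenomenon. Locally near a point of $S\cong\{z_n=0\}$, any $L^2$ holomorphic function on the complement of $S$ has vanishing negative Laurent coefficients in $z_n$ and so extends holomorphically across $S$; combined with the fact that $S$ has Lebesgue measure zero, this shows that the restriction map $A^2(\Omega)\to A^2(\Omega\setminus S)$ is a unitary isomorphism. Applying this to an orthonormal basis gives $K_\Omega=K_{\Omega\setminus S}$ on $(\Omega\setminus S)\times(\Omega\setminus S)$, and hence equality of the Bergman metrics $\partial\bar\partial\log K$ there. The inequality $b_{\Omega\setminus S}\geq b_\Omega|_{(\Omega\setminus S)\times(\Omega\setminus S)}$ is trivial. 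For the converse, given any admissible curve in $\Omega$ between two points of $\Omega\setminus S$, a small generic perturbation, together with the local boundedness of the Bergman metric and the real codimension two of $S$, produces a competing curve in $\Omega\setminus S$ of essentially the same Bergman length.

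The only nontrivial step is the path perturbation in the Bergman argument; every other piece reduces immediately to the two removable singularity theorems already flagged in the introduction.
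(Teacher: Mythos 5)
Your argument is correct and is exactly the route the paper intends: the paper offers no detailed proof of this proposition, only the preceding remark that the Riemann removable singularity theorems for bounded and for $L^2$ holomorphic functions force $c_{\Omega\setminus S}=c_\Omega$ and $K_{\Omega\setminus S}=K_\Omega$ on $\Omega\setminus S$, which is precisely what you establish and then combine with the density of $\Omega\setminus S$ and the codimension-two path-perturbation to transfer the four-point condition. Your write-up supplies the details (the bijection of bounded maps into $\mathbb{D}$, the unitary identification of the Bergman spaces, and the generic perturbation of curves) that the paper leaves implicit.
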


It is natural to ask if Theorem \ref{paris them} or \ref{paris we} can be localized. The situation when $S'\subset S\cap \Omega$ touches $\partial\Omega$ essentially differs from the original one since there is not pseudoconvexity or $k$-completeness. Nonetheless, under additional conditions we are able to obtain a partial result.

\begin{thm}\label{thm strictly}
Let $\Omega\subset\mathbb{C}^n$ be a convex domain containing no complex line. Suppose there is a closed subset $S'$ of $\Omega$ such that there are a complex affine line $S,$ a point $\zeta\in\partial\Omega\cap S$ and $R>0$ so that $$\Omega\cap\mathbb{B}(\zeta,R)\cap S'=\Omega\cap\mathbb{B}(\zeta,R)\cap S$$
and $\partial\Omega$ is strongly convex near $\zeta,$ i.e., $\partial\Omega$ contains no segment in some neighbourhood of $\zeta.$ Then $(\Omega\setminus S',k_{\Omega\setminus S'})$ is not Gromov hyperbolic.
\end{thm}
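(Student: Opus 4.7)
The plan is to reduce the statement to Theorem \ref{paris we} via a localization near the strongly convex boundary point $\zeta$. Choose $r\in(0,R)$ small enough that $\partial\Omega$ is strongly convex on $\overline{\mathbb{B}}(\zeta,r)$ and that $\Omega\cap\mathbb{B}(\zeta,r)\cap S$ is nonempty---the latter is automatic because $S$ must be transversal to $\partial\Omega$ at $\zeta$ (a complex affine hyperplane through a strongly convex boundary point that was not transversal would be contained in the real supporting hyperplane and hence fail to meet $\Omega$ near $\zeta$). Set $D:=\Omega\cap\mathbb{B}(\zeta,r)$: a bounded convex domain with $\zeta\in\partial D$, with $D\cap S'=D\cap S$ (because $D\subset\mathbb{B}(\zeta,R)$), and with $D\cap S\neq\varnothing$. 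Applying Theorem \ref{paris we} to $D$ and $S$ yields that $(D\setminus S,k_{D\setminus S})$ is not Gromov hyperbolic; moreover, inspection of the construction of \cite{paris} underlying that proof shows that non-hyperbolicity is witnessed by four-tuples of points whose defects in the Gromov-product inequality grow without bound, and these four-tuples may be arranged so that all their members converge to any prescribed point of $\partial D\cap S$ at which $\partial D$ is strongly convex---in particular, to $\zeta$.

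The main technical step is the local additive comparison
\begin{equation*}
\bigl|k_{\Omega\setminus S'}(p,q)-k_{D\setminus S}(p,q)\bigr|\leq C
\end{equation*}
for $p,q\in(\Omega\setminus S')\cap\mathbb{B}(\zeta,r/2)$, with $C$ depending only on the local geometry at $\zeta$. The inequality $k_{\Omega\setminus S'}\leq k_{D\setminus S}$ is immediate from $D\setminus S=D\setminus S'\subset\Omega\setminus S'$. For the opposite direction, the strong convexity of $\partial\Omega$ at $\zeta$ produces a holomorphic peak function $\Phi$ near $\zeta$: $|\Phi|\leq 1$ on a neighborhood of $\zeta$ in $\overline{\Omega}$, $\Phi(\zeta)=1$, and $|\Phi(z)|<1$ for all other nearby $z\in\overline{\Omega}$, obtained from the real supporting hyperplane at $\zeta$ and its complex-linear extension. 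Since $\Phi$ restricts to a bounded holomorphic function on $\Omega\setminus S'$, the Schwarz--Pick inequality applied to $\Phi\circ\varphi$ for any analytic disk $\varphi\colon\mathbb{D}\to\Omega\setminus S'$ with $\varphi(0)$ close to $\zeta$ forces the image of $\varphi$ to remain in a small neighborhood of $\zeta$; this is the classical localization of the Kobayashi metric at a peak point. Consequently, $k_{\Omega\setminus S'}(p,q)$ for $p,q$ close to $\zeta$ is realized, up to a uniform additive error, by chains of disks lying in $(\Omega\setminus S')\cap\mathbb{B}(\zeta,r/2)=(D\setminus S)\cap\mathbb{B}(\zeta,r/2)$, which gives the reverse comparison.

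With the comparison in hand, the four-tuples produced from Theorem \ref{paris we} applied to $D$, which consist of points converging to $\zeta$, continue to witness non-Gromov hyperbolicity of $(\Omega\setminus S',k_{\Omega\setminus S'})$: the unbounded violations of the four-point inequality for $k_{D\setminus S}$ are shifted by at most $2C$ when the metric is replaced. The main obstacle is the localization argument of the second paragraph: although peak-function localization is classical for the Kobayashi metric on a domain at a peak point, one must verify that removing the relatively closed singular set $S'$ does not disturb it---this is plausible because $\Phi$ remains holomorphic on $\Omega\setminus S'$ and the Schwarz--Pick bound only depends on the target $\mathbb{D}$. A secondary, bookkeeping concern is to inspect the construction of \cite{paris} carefully enough to confirm that the accumulation point of the witnessing four-tuples can indeed be chosen to be $\zeta$ rather than some arbitrary point of $\partial D\cap S$.
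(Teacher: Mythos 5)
Your reduction to Theorem \ref{paris we} does not go through as stated, and the two points you flag as ``plausible'' and ``bookkeeping'' are in fact the crux of the theorem. First, the witnessing configurations produced by the proof of Theorem \ref{paris we} do \emph{not} localize at a boundary point: the quasi-geodesic triangles there have one vertex at a \emph{fixed} interior point $p$ and another vertex $p^T$ converging to an interior point $0\in\Omega\cap S$ of the slice, and only the auxiliary point $\eta^{T_0}$ sits near $\partial\Omega\cap S$. So the unboundedly non-thin triangles (equivalently, the four-tuples with large defect) cannot be arranged to lie in $\mathbb{B}(\zeta,r/2)$, let alone to converge to $\zeta$; your additive comparison, even if true, would then be applied to points outside its region of validity. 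This is precisely why the paper does not reduce to Theorem \ref{paris we}: it builds a genuinely local family of quasi-geodesic triangles anchored at \emph{two} nearby strongly convex boundary points $\eta,\zeta$, and the new ingredient that makes this possible is Lemma \ref{quasi segment}, showing that the concatenation of quasi-geodesics along the segment $(\eta,\zeta)$ joining the two boundary points is again a quasi-geodesic --- the ``conglomerate of quasi-geodesics'' phenomenon highlighted in the introduction. Without an analogue of that lemma, there is no local substitute for the side $[p,0)$ of the triangles in Theorem \ref{paris we}.

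Second, the ``main technical step'' $\bigl|k_{\Omega\setminus S'}(p,q)-k_{D\setminus S}(p,q)\bigr|\leq C$ is not a consequence of classical peak-function localization. What the peak function (or Royden's localization, Proposition \ref{lokalizacja}) gives is a \emph{multiplicative} comparison of the infinitesimal metrics with factor tending to $1$ as $z\to\zeta$; upgrading this to a uniform \emph{additive} bound on distances requires quantitative decay rates of that factor (as in Forstneri\v{c}--Rosay or Balogh--Bonk), which are available for $C^2$ strongly pseudoconvex points but not under the purely qualitative hypothesis here that $\partial\Omega$ contains no segment near $\zeta$ (no smoothness is assumed). A multiplicative $(1+\epsilon)$-comparison is not enough to transfer unbounded four-point defects, since the defects could be swamped by $\epsilon$ times the (unbounded) diameters of the configurations. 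The paper sidesteps this entirely: it uses only the multiplicative localization of Proposition \ref{lokalizacja} together with explicit lower bounds obtained by projecting to the supporting half-space $H_\Omega$ at $\zeta$ (inequalities (\ref{epsilon}) and (\ref{ostatnie})), which suffices because its triangles are local from the start. You would need to either prove the additive comparison under these weak hypotheses or, as the paper does, abandon the reduction and construct local quasi-geodesics directly.
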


In the proof of Theorem \ref{thm strictly} we observe an interesting  pheonemana, namely that the conglomerate of quasi-geodesics gives again a quasi-geodesic (!).  The proof is purely geometric, relies undoubtedly on the local strict convextity, and does not use the boundness of $\Omega$  (only its $k$-hyperbolicity). It would be interesting to know whether the condition of the local strict convexity is superfluous and could be removed.

In \cite{paris} Haggui and Chrih also investigated some Hartogs type domains. In the present paper we easily obtain the following generalization of \cite[Theorem 3.2]{paris}:

\begin{thm}\label{last thm}
Let $\Omega\subset\mathbb{C}^n$ be a bounded $\mathbb{C}$-convex domain and $\varphi$ a bounded function on $\Omega$ so that the set $\{(z,w)\in\Omega\times\mathbb{C}:\,\lVert w\rVert<e^{-\varphi(z)}\}$ is open \textup{(}for instance $\varphi$ is upper semiconituous\textup{)}. Then the domain
$$\Omega_\varphi=\big{\{}(z,w):z\in\Omega,\,0<\lVert w\rVert<e^{-\varphi(z)}\big{\}}$$
endowed with the Kobayashi distance is not Gromov hyperbolic.
\end{thm}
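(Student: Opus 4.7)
Plan.  The plan is to produce arbitrarily large quasi-isometrically embedded Euclidean rectangles inside $(\Omega_\varphi, k_{\Omega_\varphi})$, which is incompatible with Gromov hyperbolicity.

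I will write $\Delta^*_r := \{w \in \mathbb{C} : 0 < |w| < r\}$ and set $M := \sup_\Omega \varphi$, $m := \inf_\Omega \varphi$, both finite.  Because $e^{-M} \leq e^{-\varphi(z)} \leq e^{-m}$, one has $\Omega \times \Delta^*_{e^{-M}} \subset \Omega_\varphi \subset \Omega \times \Delta^*_{e^{-m}}$.  Combining these inclusions with the product formula $k_{D_1 \times D_2} = \max(k_{D_1}, k_{D_2})$ for the Kobayashi distance and with the distance-decreasing property of the coordinate projections $\Omega_\varphi \to \Omega$ and $\Omega_\varphi \to \Delta^*_{e^{-m}}$, one obtains the sandwich
\[
\max\bigl\{k_\Omega(z_1, z_2),\, k_{\Delta^*_{e^{-m}}}(w_1, w_2)\bigr\} \;\leq\; k_{\Omega_\varphi}\bigl((z_1,w_1),(z_2,w_2)\bigr) \;\leq\; \max\bigl\{k_\Omega(z_1, z_2),\, k_{\Delta^*_{e^{-M}}}(w_1, w_2)\bigr\}
\]
valid for $(z_i, w_i) \in \Omega \times \Delta^*_{e^{-M}}$.

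I would then fix $\tau_0 := \log(M - m + 1)$ and parameterize a cuspidal curve in the $w$-direction by $\rho_w(\tau) := \exp(-M - e^\tau)$ for $\tau \geq \tau_0$.  Passing to the right half-plane cover of $\Delta^*_r$ via $w = e^{-s}$ and applying the standard hyperbolic distance formula $d_{\mathbb{H}}(s_1, s_2) = \mathrm{arccosh}\bigl(1 + |s_1-s_2|^2/(2\,\mathrm{Re}(s_1)\,\mathrm{Re}(s_2))\bigr)$, a direct computation yields
\[
k_{\Delta^*_r}(\rho_w(\tau_1), \rho_w(\tau_2)) = |\tau_2 - \tau_1| + O(1) \qquad (\tau_1, \tau_2 \geq \tau_0),
\]
uniformly in $r \in [e^{-M}, e^{-m}]$, with the $O(1)$ depending only on $M - m$.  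I would also pick a Kobayashi quasi-geodesic segment $\rho_\Omega : [0, L] \to \Omega$ of arbitrary length $L$; such segments exist for every $L$ because the bounded domain $\Omega$ has infinite Kobayashi diameter.  Defining
\[
F : [0, L] \times [\tau_0, \tau_0 + T] \longrightarrow \Omega_\varphi, \qquad F(s, \tau) := (\rho_\Omega(s),\, \rho_w(\tau)),
\]
and feeding the cusp estimate into the sandwich makes $F$ a $(1, C)$-quasi-isometric embedding for the sup metric on the source, with $C$ independent of $L$ and $T$.

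Letting $L, T \to \infty$ then produces arbitrarily large Euclidean rectangles quasi-isometrically embedded inside $(\Omega_\varphi, k_{\Omega_\varphi})$, contradicting Gromov hyperbolicity (take, for instance, a right-angle quasi-triangle inside such a rectangle whose legs diverge; this violates $\delta$-thinness for every $\delta$).  The hard part is the cusp estimate: the choice $\rho_w(\tau) = \exp(-M - e^\tau)$ is designed precisely so that the Poincar\'e distance on every $\Delta^*_r$ with $r \in [e^{-M}, e^{-m}]$ linearizes in $\tau$ with additive error uniform in $r$, which is exactly what allows the upper and lower bounds of the sandwich to meet up to a bounded additive constant.
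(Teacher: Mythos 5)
Your proposal is correct in its essential structure, but it takes a genuinely different route from the paper. Both arguments begin with the same sandwich $\Omega\times\mathbb{D}(e^{-\sup\varphi})_*\subset\Omega_\varphi\subset\Omega\times\mathbb{D}(e^{-\inf\varphi})_*$. The paper then transplants the explicit quasi-geodesic triangles from the proof of Theorem \ref{paris we} into $\Omega_r$, and must invoke the $\mathbb{C}$-convexity of the base $\Omega$ (through Proposition \ref{oszacowanie Kobayashi}, applied after projecting onto $\Omega$) to certify the lower quasi-geodesic bounds, and then Lemma \ref{podstawowe} and Proposition \ref{proposition weak linear} to show the triangles fail to be uniformly thin. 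You instead build a uniform quasi-flat: the product of a long $(1,\epsilon)$-quasi-geodesic in $(\Omega,k_\Omega)$ (available because $k_\Omega$ is an inner distance of infinite diameter on a bounded domain) with the cusp ray $\tau\mapsto e^{-M-e^\tau}$ in the fibre, the point being that this ray linearizes the punctured-disc distance with an additive error uniform over all radii in $[e^{-M},e^{-m}]$, so the two halves of the sandwich meet up to $O(1)$; arbitrarily large quasi-flats with fixed constants then kill hyperbolicity via the stability of quasi-geodesics. What your route buys is generality: nowhere do you use $\mathbb{C}$-convexity (or pseudoconvexity) of $\Omega$, only its boundedness, so your argument in fact proves the stronger statement that $\Omega_\varphi$ is never Gromov hyperbolic for \emph{any} bounded base domain. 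What the paper's route buys is economy, reusing machinery already set up for Theorem \ref{paris we}. Two bookkeeping remarks: by Lemma \ref{punctured disc} the correct coefficient in your cusp estimate is $\tfrac12|\tau_1-\tau_2|+O(1)$, not $|\tau_1-\tau_2|+O(1)$ (harmless after reparametrization); and if you wish to avoid the full product property $k_{D_1\times D_2}=\max(k_{D_1},k_{D_2})$, the upper half of your sandwich follows from the triangle inequality together with the holomorphic slice embeddings, at the cost of replacing $\max$ by the sum, which changes only the multiplicative constant of the embedding. Finally, note that the assertion that every bounded domain has infinite Kobayashi diameter does need a one-line justification (a linear peak functional at a farthest boundary point, or, in the setting of the theorem, Proposition \ref{proposition weak linear}).
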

In Theorem \ref{last thm} we only assumed $\varphi$ is just bounded on $\overline{\Omega},$ no assumption about its continuity, or (strict-)plurisubharmonicity (!).

The rest of the paper is organized as follows. In $\S$2 we collect definitions and basic facts that we shall need. $\S$3 is supplementary for the last three sections, where we present proofs of the main theorems.
\section{Preliminaries}
\subsection{Basic notation}
\begin{itemize}
\item For $X\subset\mathbb{C}$ let $X_*=X\setminus\{0\}.$
\item For $r>0$ let $\mathbb{D}(r)=\{z\in\mathbb{C}:|z|<r\},\,\mathbb{D}(1)=\mathbb{D}.$
\item For $z\in\mathbb{C}^n$ let $\lVert z\rVert$ denote the standard Euclidean norm of $z.$
\item For $\zeta \in\mathbb{C}^n,\,r>0$ let $\mathbb{B}(\zeta,r)=\{z\in\mathbb{C}^n:\lVert z-\zeta\rVert<r\},\ \mathbb{B}(0,1)=\mathbb{B}_n.$
\item Given an open set $\Omega\subset\mathbb{C}^n,\,z\in\Omega,\,X\in\mathbb{C}^n$ let
$$\textup{dist}_\Omega (z)=\inf\,\{\lVert z-w\rVert:\,w\in\partial\Omega\},$$
$$\textup{dist}_\Omega (z;X)=\inf\,\{\lVert z-w\rVert:\,w\in(z+\mathbb{C}X)\cap\partial\Omega\}.$$
\end{itemize}

\subsection{Complex convexity}

A domain is said to be:
\begin{itemize}
\item  $\mathbb{C}${\it -convex} if any non-empty intersection with a complex line is a simply connected domain.
\item {\it linearly convex} (respectively: {\it weakly linearly convex}) if each point in its complement (respectively: boundary) belongs to a complex hyperplane disjoint from the domain.
\end{itemize}
The following implications hold:
$$ \mathbb{C}\textup{-convexity}\Rightarrow\textup{linearly convexity}  \\
\Rightarrow\textup{weak linearly convexity.}$$
Let us note that all three notions of complex convexity are different. However, for bounded domains with $\mathcal{C}^1$-smooth boundaries they coincide. In the general case their place is between convexity and pseudoconvexity.

All proofs in this paper use geometric properties of weakly linearly convex domains, therefore we demonstrate the following lemmas:
\begin{lemma}\label{do wniosku}
Suppose that a weakly linearly convex domain $G\subset\mathbb{C}^n$ contains the origin and balanced sets $G_1\ldots,G_d,\, G_j\subset\{0\}^{n_1+\ldots+n_{j-1}}\times\mathbb{C}^{n_j}\times\{0\}^{n-(n_{j}+\ldots+n_d)},\,n_j\in\mathbb{N}_*,\,n_1+\ldots+n_d=n.$ Then $G$ contains the convex hull of $G_1,\,\ldots,G_d.$
\end{lemma}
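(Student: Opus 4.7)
\medskip
\noindent\textbf{Proof plan.} The plan is to argue by contradiction, using weak linear convexity to separate an alleged exit point by a complex hyperplane and then exploiting the balancedness of each $G_j$ to derive a contradiction from that separating functional.

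Set $C$ equal to the convex hull of $G_1\cup\cdots\cup G_d$. Since each balanced set contains the origin, $0\in C\cap G$. Suppose for contradiction that some $p\in C$ lies outside $G$. By convexity, the segment $[0,p]$ is contained in $C$; since $G$ is open with $0\in G$ but $p\notin G$, the number $t_*:=\inf\{t\in[0,1]:tp\notin G\}$ yields a point $p^*:=t_*p\in\partial G\cap C$. By weak linear convexity, there exists a complex hyperplane $H\ni p^*$ with $H\cap G=\varnothing$. Using that $0\in G$, I would write $H=\{z\in\mathbb{C}^n:1+\ell(z)=0\}$ for some $\mathbb{C}$-linear functional $\ell$.

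The key step---where balancedness really enters---is the claim that $|\ell(z)|<1$ for every $z\in G_1\cup\cdots\cup G_d$. Indeed, if $z\in G_j$ with $\ell(z)\neq 0$ and $|\ell(z)|\ge 1$, then $\lambda:=-1/\ell(z)$ lies in $\overline{\mathbb{D}}$, so $\lambda z\in G_j\subset G$ by balancedness; but $1+\ell(\lambda z)=0$ would place $\lambda z$ in $H\cap G$, contradicting $H\cap G=\varnothing$.

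To finish, write $p^*=\sum_i s_iu_i$ as a convex combination with $u_i\in G_{j(i)}$, $s_i\ge 0$, $\sum_i s_i=1$. Then $\ell(p^*)=\sum_i s_i\ell(u_i)$ is a convex combination of points in the open unit disc $\mathbb{D}$, hence itself lies in $\mathbb{D}$. Therefore $1+\ell(p^*)\neq 0$, contradicting $p^*\in H$, and so $C\subset G$. The main subtlety to get right is ensuring that the exit point $p^*$ lies in $C$ itself, not merely in $\overline{C}$; choosing $p^*$ on the straight segment $[0,p]$ makes this automatic and is what converts the pointwise strict bound $|\ell(u_i)|<1$ into the strict inequality $|\ell(p^*)|<1$ needed in the last step.
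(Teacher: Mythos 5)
Your proof is correct, but it takes a genuinely different route from the paper's. The paper follows Znamenskii--Znamenskaya: for $\epsilon\in(0,1)$ it builds an auxiliary union $G_\epsilon$ of products of scaled copies of the $G_j$, passes to the smallest linearly convex set $\widehat{G}_\epsilon\supset G_\epsilon$ contained in $G$, uses that a balanced linearly convex set is convex to conclude that the ``$\ell^1$-ball'' $E_\epsilon=\{z:\sum_j\mu_j(\cdot)<\epsilon\}$ built from the Minkowski functionals $\mu_j$ of the $G_j$ sits inside $G$, and lets $\epsilon\to1$. You instead argue directly: take a first exit point $p^*\in\partial G$ of a segment $[0,p]$ with $p\in\mathrm{conv}(\bigcup_j G_j)\setminus G$, separate it by a complex hyperplane $\{1+\ell=0\}$ disjoint from $G$, and use balancedness of each $G_j$ to force $|\ell|<1$ on $\bigcup_j G_j$ (if $|\ell(z)|\ge1$ then $-z/\ell(z)$ would lie in $G\cap H$), whence $|\ell(p^*)|<1$ by convexity of the disc, contradicting $p^*\in H$. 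All the steps check out: $p^*$ does lie in the convex hull because it sits on $[0,p]$, and the strict inequality survives the convex combination because $\mathbb{D}$ is convex. Your argument is shorter and self-contained, and it actually proves a stronger statement, since it never uses that the $G_j$ live in complementary coordinate subspaces --- it works for arbitrary balanced subsets of $G$. What the paper's route buys is an explicit convex set $E_\epsilon$ (a Minkowski-functional ``$\ell^1$-ball'') inside $G$, at the cost of invoking the machinery of linearly convex hulls; for the applications in the paper either version of the lemma suffices.
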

\begin{proof}
The proof is similar to the one in \cite{ZZ}. For the sake of completeness we will include it. For every $\epsilon\in (0,1)$ there exists a $\delta>0$ so that
$$G_\epsilon=\bigcup_{j=1}^d\big{(}\delta G_1\times\ldots\times\delta G_{j-1}\times\epsilon G_j\times\delta G_{j+1}\times\ldots\times\delta G_{d}\big{)}.$$
Note that $\widehat{G}_\epsilon\subset G,$ where $\widehat{G}_\epsilon$ is the smallest linearly convex domain that contains $G_\epsilon.$ But
$$G_\epsilon=\big{\{}z\in\mathbb{C}^n\big{|}\forall b\in\mathbb{C}^n: \langle z,b\rangle=1\ \exists a\in G_\epsilon: \langle a,b\rangle=1\big{\}}.$$
(see for instance \cite[Proposition 4.6.2]{Hormander}).Then $\widehat{G}_\epsilon$ is not only balanced but also convex (since it is linearly convex). Consequently,
$$E_\epsilon=\big{\{}z\in\mathbb{C}^n:\sum_{j=1}^d\mu_j(z_{n_1+\ldots n_{j}+1},\ldots,z_{n_1+\ldots n_{j+1}})<\epsilon\big{\}}\subset\widehat{G}_\epsilon\subset G.$$
where $\mu_j$ denotes the Minkowski functional corresponding to $G_j.$ Letting $\epsilon\rightarrow 1,$ we get the lemma.
\end{proof}

\begin{lemma}\label{product}
Suppose $\Omega_j\subset\mathbb{C}^{n_j},\,j=1,2,\ n_j\in\mathbb{N}_*,$ are bounded, weakly linearly convex domains. Then $\Omega_1\times\Omega_2$ is weakly linearly convex.
\end{lemma}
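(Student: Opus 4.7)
The plan is straightforward and follows almost directly from the definition, so I expect no substantive obstacle. Fix a boundary point $p=(p_1,p_2)\in\partial(\Omega_1\times\Omega_2)$. The first step is to observe that for open sets one has the standard description
\[
\partial(\Omega_1\times\Omega_2)=(\partial\Omega_1\times\overline{\Omega_2})\,\cup\,(\overline{\Omega_1}\times\partial\Omega_2),
\]
so at least one of $p_1,p_2$ lies on the boundary of the corresponding factor. By symmetry we may assume $p_1\in\partial\Omega_1$.

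Next I would invoke weak linear convexity of $\Omega_1$ to produce a complex hyperplane $H_1\subset\mathbb{C}^{n_1}$ with $p_1\in H_1$ and $H_1\cap\Omega_1=\varnothing$. The key and essentially only observation is then that
\[
H:=H_1\times\mathbb{C}^{n_2}
\]
is a complex affine hyperplane in $\mathbb{C}^{n_1+n_2}$ (its complex codimension equals that of $H_1$, namely one), it contains $p=(p_1,p_2)$ regardless of what $p_2$ is, and it is disjoint from $\Omega_1\times\Omega_2$ because its projection onto the first factor misses $\Omega_1$. The case $p_2\in\partial\Omega_2$ is handled identically by producing $H_2\subset\mathbb{C}^{n_2}$ and taking $H=\mathbb{C}^{n_1}\times H_2$.

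Finally, since $\Omega_1\times\Omega_2$ is a product of connected open sets it is itself a domain, so all that was needed is the supporting-hyperplane condition at each boundary point, which the previous paragraph supplies. Note that boundedness of the factors plays no role in this argument; it is listed in the hypothesis presumably because the lemma is only invoked later in that setting. The main ``obstacle'', if any, is purely notational—checking that a complex hyperplane of one factor lifts to a complex hyperplane of the product of the correct codimension—which is immediate.
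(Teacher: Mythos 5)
Your proof is correct and is essentially the same as the paper's: both pick a boundary point $(p_1,p_2)$, reduce by symmetry to $p_1\in\partial\Omega_1$, take a complex hyperplane through $p_1$ missing $\Omega_1$, and cross it with $\mathbb{C}^{n_2}$. Your added remarks (the explicit boundary decomposition of the product and the observation that boundedness is not used) are accurate but not needed.
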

Since the proof is short we include it.
\begin{proof}
Fix $\zeta=(\zeta_1,\zeta_2)\in\partial (\Omega_1\times\Omega_2).$ Without loss of generality we may assume that $\zeta_1\in\partial\Omega_1.$ Since $\Omega_1$ is weakly linearly convex, there exists a complex hyperplane $L\subset\mathbb{C}^{n_1}$ so that $(\zeta_1+L)\cap\Omega_1=\varnothing.$ Hence, $$\big{(}\zeta+(L\times\mathbb{C}^{n_2})\big{)}\cap(\Omega_1\times\Omega_2)=\varnothing.$$
This shows that $\Omega_1\times\Omega_2$ is weakly linearly convex.
\end{proof}

More properties of complex convex domains can be found in \cite{Sigurdsson}, \cite{Hormander}.

\subsection{The Kobayashi metric and distance}
Given a domain $\Omega\subset\mathbb{C}^n$ the {\it Kobayashi pseudometric} is defined as follows
\begin{multline*}
\kappa_{\Omega}(z;X)=\inf\big{\{}|\lambda|:\,\textup{there exists }f\in H(\mathbb{D},\Omega) \textup{ so that }  \\
f(0)=z,\,\lambda f'(0)=X\big{\}}.
\end{multline*}
If $\alpha:[a,b]\rightarrow\Omega$ is a $\mathcal{C}^1$ piecewise curve we can define the {\it length of }$\alpha$ to be
$$l_{\kappa_\Omega}(\alpha)=\int_a^b\kappa_\Omega \big{(}\alpha(t);\alpha'(t)\big{)}dt.$$
One can then define the {\it Kobayashi pseudodistance} to be
\begin{multline*}
k_{\Omega}(p,q)=\inf\,\big{\{l_{\kappa_\Omega}}(\alpha):\,\alpha:[0,1]\rightarrow\Omega \textup{ is a piecewise }\mathcal{C}^1\textup{ smooth} \\
\textup{with }\alpha(0)=p,\,\alpha(1)=q\big{\}}.
\end{multline*}
An important property of the Kobayashi pseudodistance is the {\it holomorphic contractibility:} if $f:\Omega_1\rightarrow\Omega_2$ is holomorphic, then
$$k_{\Omega_2}(f(z),f(w))\leq k_{\Omega_1}(z,w).$$
$\Omega$ is $k$-{\it hyperbolic} if $k_\Omega$ is a distance. It easily follows from the distance decreasing property that the Kobayashi pseudodistance for bounded domains is a distance. Further, $k$-hyperbolicity of convex domains is well understood. That is:

\begin{thm}\textup{(\cite{Barth})}
Let $\Omega$ be a geometrically convex domain in $\mathbb{C}^n$ containing no complex affine line. Then the Carath\'{e}odory pseudodistance is a distance, and every closed ball with respect to it is compact. In particular, $\Omega$ is $k$-hyperbolic.
\end{thm}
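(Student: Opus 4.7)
The plan is to exhibit a sufficient supply of bounded holomorphic maps $\Omega\to\mathbb{D}$, built directly from real supporting hyperplanes. Taken together they will separate points of $\Omega$ (so that $c_\Omega$ is a distance) and force every closed $c_\Omega$-ball to be both Euclidean-bounded and bounded away from $\partial\Omega$ (so that it is relatively compact in $\Omega$). The $k$-hyperbolicity will then follow at once from $c_\Omega\leq k_\Omega$.

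The key geometric step I would establish first is: for every $p\in\Omega$ and every $v\in\mathbb{C}^n\setminus\{0\}$, there is a $\mathbb{C}$-linear functional $\lambda$ on $\mathbb{C}^n$ with $\mathrm{Re}\,\lambda$ bounded above on $\Omega$ and $\lambda(v)\neq 0$. Since $\Omega$ contains no complex line, $p+\mathbb{C}v$ is not contained in $\Omega$; convexity provides a point $p+t_0v\notin\Omega$ which can be strictly separated from $\Omega$ by a real hyperplane, yielding an $\mathbb{R}$-linear $\rho$ with $\rho|_\Omega<c\leq\rho(p+t_0v)$. The functional $\lambda(z):=\rho(z)-i\rho(iz)$ is then $\mathbb{C}$-linear with $\mathrm{Re}\,\lambda=\rho$, and the inequality $\rho(t_0v)>0$ forces $(\rho(v),\rho(iv))\neq(0,0)$, hence $\lambda(v)\neq 0$. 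Composing $\lambda$ with a biholomorphism of the half-plane $\{\mathrm{Re}\,w<c\}$ onto $\mathbb{D}$ turns it into some $f\in H(\Omega,\mathbb{D})$. Applied with $v=q-p$ this gives $f(p)\neq f(q)$, so $c_\Omega(p,q)>0$.

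For a closed ball $B_R=\{z\in\Omega:c_\Omega(z,z_0)\leq R\}$ I rule out both escape to infinity and approach to $\partial\Omega$ by contradiction, using the same construction. If a sequence $z_k\in B_R$ had $\lVert z_k\rVert\to\infty$, pass to a subsequence with $z_k/\lVert z_k\rVert\to v_0$ and apply the lemma to $v_0$; boundedness of $\mathrm{Re}\,\lambda$ on $\Omega$ forces $\mathrm{Re}\,\lambda(v_0)\leq 0$, while $\lambda(v_0)\neq 0$ yields $|\lambda(z_k)|\to\infty$, so $|f(z_k)|\to 1$, contradicting $k_{\mathbb{D}}(f(z_k),f(z_0))\leq R$. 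If instead $z_k\to\zeta\in\partial\Omega$, a real supporting hyperplane at $\zeta$ supplies in the same way a $g\in H(\Omega,\mathbb{D})$ which extends continuously to $\zeta$ with $|g(\zeta)|=1$; then $|g(z_k)|\to 1$, again contradicting the $c_\Omega$-bound.

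The delicate step I anticipate is translating Euclidean escape (to infinity or to $\partial\Omega$) into blow-up of the Poincar\'e distance in $\mathbb{D}$: one must verify that the specific bounded holomorphic function produced from a separating hyperplane genuinely peaks along the sequence in question, without any smoothness or boundedness assumption on $\Omega$. Convexity is exactly what supplies separating hyperplanes at exterior points and supporting hyperplanes at boundary points; the hypothesis \emph{no complex affine line} is precisely what allows such a $\lambda$ to exist for every direction $v$, which is why it is the natural hypothesis in Barth's theorem.
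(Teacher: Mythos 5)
The paper does not actually prove this statement---it is quoted from Barth's article \cite{Barth}---so there is no internal proof to compare against; your argument is correct and is essentially Barth's original one. The complexified separating/supporting functionals $\lambda(z)=\rho(z)-i\rho(iz)$, composed with a M\"obius map of $\{\mathrm{Re}\,w<c\}$ onto $\mathbb{D}$, do separate points (so $c_\Omega$ is a distance, hence so is $k_\Omega\geq c_\Omega$) and do force every closed $c_\Omega$-ball to be Euclidean-bounded and bounded away from $\partial\Omega$, hence compact; the only step left tacit is the standard continuity of $c_\Omega(\cdot,z_0)$, needed so that the ball is closed in $\Omega$.
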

We say $(\Omega,k_\Omega)$ is $k${\it -finitely compact} if every ball with finite radius is relatively compact in $\Omega$

For further information on the Kobayashi metric/distance we refer the reader to \cite{Jarnicki}, \cite{Kobayashi}.

\subsection{Gromov hyperbolicity}
Let $(X,d)$ be a geodesic metric space.

If $[a,b]\subset\mathbb{R}$ is an interval, a curve $\alpha:[a,b]\rightarrow X$ is a {\it geodesic} if $d(\alpha(s),\alpha(t))=|s-t|$ for all $s,\,t\in [a,b].$ A {\it geodesic triangle} in the metric space is a choice of three points in $X$ and geodesic segments connecting these points. A geodesic triangle is said to be $M$-{\it thin} if any point on any side of the triangle is within the distance of $M$ from the other two sides. A geodesic metric space is said to be ({\it Gromov}) {\it hyperbolic} if there exists $M>0$ so that every geodesic triangle is $M$-thin.

By definition, an $(A,B)$-{\it quasi-geodesic} $\beta:[c,d]\rightarrow X$ satisfies the condition
$$A^{-1}|s-t|-B\leq d(\beta(s),\beta(t)) \leq A|s-t|+B$$
for all $s,\,t\in[c,d].$ An $(A,B)$-{\it quasi-geodesic triangle} consists of three $(A,B)$-quasi-geodesics (its sides). Such a triangle is said to be $M$-{\it thin} if each side lies in the $M$-neighbourhood of the union of the other two sides.

\begin{proposition}\textup{(\cite{Bridson})}\label{prop Gromov}
If $(X,d)$ is hyperbolic, then every $(A,B)$-quasi-geodesic triangle is $M$-thin for some $M>0$ depending on $A,\,B.$
\end{proposition}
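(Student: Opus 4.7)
The plan is to reduce the quasi-geodesic statement to the geodesic statement via the Morse-type stability lemma for quasi-geodesics in Gromov hyperbolic spaces. That stability lemma is precisely the content of \cite[Chapter III.H, Theorem 1.7]{Bridson}, which the authors have already invoked earlier in the introduction, so it is legitimate to treat it as a black box here: in a $\delta$-hyperbolic geodesic metric space, for every pair of constants $(A,B)$ there exists $H=H(\delta,A,B)>0$ such that every $(A,B)$-quasi-geodesic segment lies at Hausdorff distance at most $H$ from any geodesic segment joining its endpoints.

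Granted this, I would argue as follows. Assume $(X,d)$ is $\delta$-hyperbolic, i.e.\ every geodesic triangle is $\delta$-thin. Fix an $(A,B)$-quasi-geodesic triangle with vertices $x_1,x_2,x_3$ and quasi-geodesic sides $\alpha_{ij}$ joining $x_i$ to $x_j$. Since $X$ is geodesic, choose actual geodesic segments $\gamma_{ij}$ with the same endpoints; they form a geodesic triangle, hence $\delta$-thin. Apply Morse stability to each pair $(\alpha_{ij},\gamma_{ij})$ to obtain $H$ as above.

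Now pick any point $p$ on a quasi-geodesic side, say $p\in\alpha_{12}$. By stability there is $q\in\gamma_{12}$ with $d(p,q)\le H$. By $\delta$-thinness of the geodesic triangle there is $r\in\gamma_{13}\cup\gamma_{23}$ with $d(q,r)\le\delta$. Applying stability in the other direction on whichever side contains $r$, there is a point $s$ on the corresponding quasi-geodesic side $\alpha_{13}$ or $\alpha_{23}$ with $d(r,s)\le H$. The triangle inequality yields $d(p,s)\le 2H+\delta$, so the quasi-geodesic triangle is $M$-thin with
\[
M\;=\;2H(\delta,A,B)+\delta,
\]
which depends only on $\delta$, $A$, $B$, as required.

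The only genuinely hard step is the black-boxed Morse stability lemma; once it is in hand the proposition is a mechanical two-step chase (quasi-geodesic $\rightsquigarrow$ geodesic via $H$, geodesic $\rightsquigarrow$ geodesic via $\delta$, geodesic $\rightsquigarrow$ quasi-geodesic via $H$). If one preferred to avoid citing \cite{Bridson} for stability, the main obstacle would be reproving it, which uses the standard exponential divergence / subdivision argument in hyperbolic spaces and is the technical heart of the theory; since the paper already cites Bridson--Haefliger for this material, the proposal above is the natural route.
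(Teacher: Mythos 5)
Your argument is correct: the paper gives no proof of this proposition at all, merely citing Bridson--Haefliger, and your reduction of the quasi-geodesic statement to the geodesic one via the Morse stability lemma (quasi-geodesic $\to$ geodesic within $H$, geodesic $\to$ geodesic within $\delta$ by thinness, geodesic $\to$ quasi-geodesic within $H$, giving $M=2H+\delta$) is exactly the standard derivation the citation is pointing to. Nothing to add.
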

The book \cite{Bridson} by Bridson and Haefliger is one of the standard references for Gromov hyperbolic metric spaces.

\section{A Lower Bound For The Kobayashi Distance}

\begin{lemma}\label{punctured disc}
For any $z,\,w\in\mathbb{D}_*$ we have
$$k_{\mathbb{D}_*}(z,w)\geq\frac{1}{2}\Big{|}\log\frac{\log |w|}{\log |z|}\Big{|}.$$
The equality holds if and only if $\frac{w}{z}\in\mathbb{R}_{>0}.$
\end{lemma}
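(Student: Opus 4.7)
The strategy is to lift the problem to the universal cover of $\mathbb{D}_*$, namely the map $\pi\colon H\to\mathbb{D}_*$, $\pi(\zeta)=e^\zeta$, where $H=\{\zeta\in\mathbb{C}:\operatorname{Re}\zeta<0\}$ is the left half-plane. Since $\mathbb{D}_*$ is a hyperbolic Riemann surface, its Kobayashi distance coincides with the Poincar\'e distance descended from $H$, so
$$k_{\mathbb{D}_*}(z,w)=\inf_{k\in\mathbb{Z}}k_H\bigl(\tilde z,\,\tilde w+2\pi ik\bigr)$$
for any fixed lifts $\tilde z,\tilde w$. Writing $\tilde z=\log|z|+i\arg z =: -a_1+ib_1$ and $\tilde w=\log|w|+i\arg w =: -a_2+ib_2$, we have $a_1,a_2>0$, and varying $k$ affects only the imaginary part of the second argument.

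It thus suffices to establish the pointwise bound
$$k_H(-a_1+ib_1,\,-a_2+ib_2)\ \geq\ \tfrac{1}{2}\bigl|\log(a_1/a_2)\bigr|,$$
with equality iff $b_1=b_2$. I would invoke the explicit formula
$$\tanh k_H(\zeta_1,\zeta_2)=\Big|\frac{\zeta_1-\zeta_2}{\zeta_1+\overline{\zeta_2}}\Big|,$$
which comes from the M\"obius biholomorphism $\zeta\mapsto(\zeta-\zeta_2)/(\zeta+\overline{\zeta_2})$ sending $H$ onto $\mathbb{D}$. Substituting $\zeta_j=-a_j+ib_j$ gives
$$\tanh^2 k_H(\zeta_1,\zeta_2)=\frac{(a_1-a_2)^2+(b_1-b_2)^2}{(a_1+a_2)^2+(b_1-b_2)^2}.$$
Because $(a_1+a_2)^2>(a_1-a_2)^2$ (as $a_1,a_2>0$), the right side is strictly increasing in $(b_1-b_2)^2$; hence it attains its minimum precisely at $b_1=b_2$, where it equals $\bigl((a_1-a_2)/(a_1+a_2)\bigr)^2$. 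Inverting $\tanh$ yields $k_H=\tfrac{1}{2}|\log(a_1/a_2)|$.

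Passing to the infimum over $k\in\mathbb{Z}$, equality in the lemma holds iff $b_1=\arg w+2\pi k$ for some $k$, i.e.\ iff $\arg z\equiv\arg w\pmod{2\pi}$, which is exactly the condition $w/z\in\mathbb{R}_{>0}$. Since $\log|z|,\log|w|<0$, the quotient $a_1/a_2=\log|w|/\log|z|$ is positive and $|\log(a_1/a_2)|=|\log(\log|w|/\log|z|)|$, matching the stated bound. The only point requiring care is the normalization of the Poincar\'e metric so that $k_\mathbb{D}(0,r)=\tfrac{1}{2}\log\frac{1+r}{1-r}$ (producing the factor $\tfrac{1}{2}$); beyond this bookkeeping there is no substantive obstacle, since the argument is a direct calculation on the universal cover.
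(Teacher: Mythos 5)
Your proof is correct and follows essentially the same route as the paper: lift to the universal cover of $\mathbb{D}_*$ by the exponential, express $k_{\mathbb{D}_*}$ as an infimum of half-plane Poincar\'e distances over deck transformations, and show that the distance is minimized exactly when the arguments match. The only cosmetic difference is that you verify the minimization by the explicit $\tanh$-formula and a monotonicity computation in $(b_1-b_2)^2$, whereas the paper argues geometrically that the image of the relevant horizontal line under a M\"obius map is a circle orthogonal to $\partial\mathbb{D}$ and symmetric about the real axis.
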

\begin{proof}
Using a rotation we may assume $z\in(0,1)$ and $w=|w|e^{i\varphi}$ for some $\varphi\in [-\pi,\pi).$ Now recall that the map $$\pi:\mathbb{H}_+=\{\zeta\in\mathbb{C}: \Im \zeta>0\}\rightarrow\mathbb{D}_*$$
$$\pi(z)=e^{iz}$$
is a holomorphic covering. Hence
$$k_{\mathbb{D}_*}(z,w)=\inf\big{\{}k_{\mathbb{H}_+}(-i\log z,-i\log |w|+\varphi+2k\pi )\,:\,k\in\mathbb{Z}\big{\}}$$
(cf. \cite[Theorem 3.3.7]{Jarnicki})
$$=\inf\big{\{}k_{\mathbb{D}}\big{(}\sigma(-i\log z),\sigma(-i\log |w|+\varphi+2k\pi )\big{)}\,,\,k\in\mathbb{Z}\big{\}},$$
where $\sigma(u)=\frac{u+i\log z}{u-i\log z}.$ $\sigma$ maps the line $\{s\in\mathbb{C}:\,\textup{Im}\,s=\log |w|\}$ onto the circle symmetric with respect to the real axis and orthogonal to $\partial\mathbb{D}.$ Consequently, from the formula for the Kobayashi distance on the disc we get:
\begin{multline*}
k_{\mathbb{D}_*}(z,w)=k_{\mathbb{D}}\big{(}\sigma(-i\log z),\sigma(-i\log |w|+\varphi)\big{)} \\
\geq k_{\mathbb{D}}\big{(}\sigma(-i\log z),\sigma(-i\log |w|)\big{)}=\frac{1}{2}\Big{|}\log\frac{\log |w|}{\log z}\Big{|}.
\end{multline*}
\end{proof}

\begin{corollary}The curve $\alpha:[0,\infty)\rightarrow\mathbb{D}_*,\,\alpha(t)=e^{-e^{2u}}$ is $k_{\mathbb{D}_*}$-geodesic.
\end{corollary}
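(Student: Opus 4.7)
The plan is to apply Lemma \ref{punctured disc} directly, since both the lower bound and its equality case are tailor-made for this curve. Notice that the curve $\alpha$ takes values in the positive real interval $(0,1)\subset\mathbb{D}_*$; consequently, for any $s,t\in[0,\infty)$, the ratio $\alpha(t)/\alpha(s)$ is a positive real number. So the equality clause of Lemma \ref{punctured disc} is satisfied with $z=\alpha(s)$ and $w=\alpha(t)$.

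The only computation that remains is to simplify the right-hand side of the lemma for this particular parametrization. Since $|\alpha(u)|=e^{-e^{2u}}$, we have $\log|\alpha(u)|=-e^{2u}$, and therefore
\[
\frac{1}{2}\left|\log\frac{\log|\alpha(t)|}{\log|\alpha(s)|}\right|
=\frac{1}{2}\bigl|\log e^{2(t-s)}\bigr|=|t-s|.
\]
Combining this with Lemma \ref{punctured disc} yields $k_{\mathbb{D}_*}(\alpha(s),\alpha(t))=|s-t|$ for every $s,t\in[0,\infty)$, which is exactly the defining property of a geodesic.

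There is no real obstacle here; the argument is entirely a one-line reduction to the previous lemma. The only thing to verify carefully is that $\alpha$ is really parametrized by arclength (rather than being a reparametrization of one), and the identity $\log|\alpha(u)|=-e^{2u}$ makes this immediate. Note that $\alpha$ is in fact a geodesic ray defined on $[0,\infty)$, not merely on a compact interval, which is consistent with the usual notion of (unit-speed) geodesic.
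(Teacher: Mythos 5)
Your argument is correct and is exactly the paper's proof: the paper simply invokes the equality clause of Lemma \ref{punctured disc} (which applies since $\alpha(s)/\alpha(t)\in\mathbb{R}_{>0}$) to get $k_{\mathbb{D}_*}(\alpha(s),\alpha(t))=|s-t|$, and your computation just makes the elided arithmetic explicit. Nothing further is needed.
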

\begin{proof}
The second part of Lemma \ref{punctured disc} implies that $k_{\mathbb{D}_*}(\alpha(t),\alpha(s))=|s-t|.$
\end{proof}

\begin{lemma}\label{podstawowe}
Suppose $\Omega\subset\mathbb{C}^n$ is a bounded weakly linearly convex domain and $p,\,q\in\Omega$ are distinct. Let $L$ be the complex line containing $p$ and $q.$ If $\zeta\in L\cap\partial\Omega,$ then
$$k_\Omega(p,q)\geq\frac{1}{2}\Big{|}\log\frac{\log\,\frac{\lVert p-\zeta \rVert}{d}}{\log\,\frac{\lVert q-\zeta \rVert}{d}}\Big{|},$$
where $d=\textup{diam}\,\Omega.$
\end{lemma}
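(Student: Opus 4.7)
Since the right-hand side is exactly the lower bound for $k_{\mathbb{D}_*}$ furnished by Lemma~\ref{punctured disc}, my plan is to project $\Omega$ holomorphically into the punctured unit disc by means of a linear functional coming from weak linear convexity. I first translate so that $\zeta=0$ and invoke weak linear convexity at $0\in\partial\Omega$ to obtain a complex hyperplane $H\ni 0$ with $H\cap\Omega=\varnothing$. Let $\phi\colon\mathbb{C}^n\to\mathbb{C}$ be a linear functional with $\ker\phi=H$. Because $p,q\in L\cap\Omega$ while $H\cap\Omega=\varnothing$, the line $L$ is not contained in $H$, so $\phi|_L\not\equiv 0$. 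If $v$ is a unit vector spanning $L$ and $p=p_0v$, $q=q_0v$, then $|p_0|=\lVert p-\zeta\rVert$ and $|q_0|=\lVert q-\zeta\rVert$; rescaling $\phi$ so that $\phi(v)=1$ gives $\phi(p)=p_0$ and $\phi(q)=q_0$.

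From $\zeta\in\overline{\Omega}$ and $\textup{diam}\,\Omega=d$ it follows that $\Omega\subset\overline{\mathbb{B}}(0,d)$. The key step is to verify that $|\phi(z)|\leq d$ for every $z\in\Omega$, so that $\phi/d$ is a well-defined holomorphic map $\Omega\to\mathbb{D}_*$. Granted this, the distance-decreasing property of $k_\Omega$ together with Lemma~\ref{punctured disc} give
\[
k_\Omega(p,q)\;\geq\;k_{\mathbb{D}_*}\!\bigl(\phi(p)/d,\phi(q)/d\bigr)\;\geq\;\frac{1}{2}\left|\log\frac{\log(|q_0|/d)}{\log(|p_0|/d)}\right|,
\]
which is exactly the asserted inequality.

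The main obstacle is the pointwise bound $|\phi(z)|\leq d$ on $\Omega$. The naive estimate $|\phi(z)|\leq\lVert\phi\rVert\,\lVert z\rVert\leq\lVert\phi\rVert\,d$ is not automatically sharp enough: with $\phi(v)=1$ one has $\lVert\phi\rVert=1/\textup{dist}(v,H)\geq 1$, with equality only when $v\perp H$, which cannot be imposed in general. The proof must therefore exploit the interplay between the specific hyperplane $H$ supplied by weak linear convexity, the line $L$, and the bounding ball $\overline{\mathbb{B}}(\zeta,d)$---for instance, by choosing $H$ judiciously among all complex hyperplanes through $\zeta$ disjoint from $\Omega$ so that the projection of $\Omega$ along $H$ onto $L$ stays inside the disc of radius $d$ about $\zeta$.
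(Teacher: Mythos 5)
Your strategy is exactly the one the paper uses: translate $\zeta$ to the origin, take the complex hyperplane $H$ through $\zeta$ with $H\cap\Omega=\varnothing$ supplied by weak linear convexity, map $\Omega$ into $\mathbb{D}(d)_*$ by the projection onto $L$ along $H$ (your $\phi$ is precisely that projection read in the unit coordinate of $L$), and finish with holomorphic contractibility and Lemma~\ref{punctured disc}. But your proof is not complete: you explicitly leave open the one step that carries all the content, namely $|\phi(z)|\le d$ for $z\in\Omega$, and you are right to be suspicious of it. For the oblique projection this containment is false in general. Take $\Omega=\mathbb{B}(c,\rho)\subset\mathbb{C}^2$ with $c=\rho(\sin\theta,\cos\theta)$ and $\theta>0$ small, so $\zeta=0\in\partial\Omega$ and $d=2\rho$; the only complex hyperplane through $0$ missing $\Omega$ is the complex tangent $H=\{z_1\sin\theta+z_2\cos\theta=0\}$ (so no ``judicious choice'' is available), and with $L=\mathbb{C}e_1$, which does meet $\Omega$, the projection along $H$ is $\phi(z)=z_1+z_2\cot\theta$, whence $\phi(c)=\rho/\sin\theta\gg d$. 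Your fallback normalization (unit-norm functional $\ell$ vanishing on $H$) does give $\ell(\Omega)\subset\mathbb{D}(d)_*$, but then $|\ell(p)|=\textup{dist}(v,H)\,\lVert p-\zeta\rVert$ and likewise for $q$, and since the Kobayashi distance is insensitive to rescaling the target this yields only
$$k_\Omega(p,q)\ \ge\ \frac{1}{2}\Big|\log\frac{\log\frac{\lVert p-\zeta\rVert}{d}+\log\textup{dist}(v,H)}{\log\frac{\lVert q-\zeta\rVert}{d}+\log\textup{dist}(v,H)}\Big|,$$
which is strictly weaker than the asserted bound (adding the same negative quantity to numerator and denominator moves the ratio of the two negative logarithms toward $1$). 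So neither normalization closes the gap you identified.

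You should also know that the paper's own proof does not resolve this point: it asserts ``Clearly, $\pi(\Omega)\subset\mathbb{D}(d)_*$'' for exactly this oblique projection and moves on. In other words, you have put your finger on the genuine weak spot of the published argument rather than missed an idea contained in it. What the projection argument honestly proves is the displayed weaker inequality, with a constant $\textup{dist}(v,H)$ depending on $\zeta$ and $L$; that weaker form still suffices for the uses of Lemma~\ref{podstawowe} later in the paper (the divergence statements in Proposition~\ref{proposition weak linear} and in Section~5 only need a lower bound tending to infinity as one endpoint approaches $\zeta$). But as a proof of the lemma with the stated constant, your argument, like the paper's, is incomplete at precisely the step you flagged.
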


\begin{proof}
$\Omega$ is weak linearly convex therefore there exists a complex hyperplane $H$ such that $\Omega\cap\big{(}
\zeta+H\big{)}=\varnothing$ (cf. \cite{Sigurdsson}). Using an affine transformation we may assume $\zeta=0.$ Now, consider the projection $\pi:\mathbb{C}^n\rightarrow\mathbb{C}$ onto $L$ in the direction $H.$ Clearly, $\pi(\Omega)\subset\mathbb{D}(d)_*.$ By the holomorphic contractibility:
$$k_\Omega(p,q)\geq k_{\mathbb{D}(d)_*}(\pi(p),\pi(q))=k_{\mathbb{D}_*}\Big{(}\frac{\pi(p)}{d},\frac{\pi(q)}{d}\Big{)}\geq \frac{1}{2}\Big{|}\log\frac{\log\,\frac{\lVert p-\zeta \rVert}{d}}{\log\,\frac{\lVert q-\zeta \rVert}{d}}\Big{|}$$
(in the last inequality we applied Lemma \ref{punctured disc}).
\end{proof}
The estimate in Lemma \ref{podstawowe} is not optimal even in dimension $1.$ However, it is sufficient for demonstrating the $k$-completeness of every bounded weak linearly convex domain, which, to the best of our knowledge, has not been previously observed. Recall that a domain $\Omega\subset\mathbb{C}^n$ is called $k$-complete if and $k_\Omega$-Cauchy sequence $(z_n)_{n\in\mathbb{N}}\subset\Omega$ converges to a point $z_0\in\Omega$ with respect to the natural topology on $\Omega,$ i.e., $\lVert z_n-z_0\rVert\rightarrow 0.$
\begin{proposition}\label{proposition weak linear}
Let $\Omega\subset\mathbb{C}^n$ be a weakly linearly bounded domain. Then $\Omega$ is $k$-finitely compact. In particular, $\Omega$ is $k$-complete.
\end{proposition}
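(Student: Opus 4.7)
The goal is to show that every closed Kobayashi ball $\overline{B}_k(z_0,R)=\{z\in\Omega:k_\Omega(z_0,z)\leq R\}$ is relatively compact in $\Omega$. Since $\Omega$ is bounded, $\overline{\Omega}$ is compact in $\mathbb{C}^n$, so the only way relative compactness can fail is if some sequence $(z_n)\subset\overline{B}_k(z_0,R)$ accumulates at a point $z^\ast\in\partial\Omega$. My plan is to derive a contradiction by showing $k_\Omega(z_0,z_n)\to\infty$ along such a sequence.

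The key idea is to repeat the projection argument used to prove Lemma \ref{podstawowe}, but anchored at the limit point $z^\ast$ rather than at a boundary point sitting on the complex line through $z_0$ and $z_n$ (which may drift uncontrollably with $n$). By weak linear convexity, there exists a complex hyperplane $H\subset\mathbb{C}^n$ with $(z^\ast+H)\cap\Omega=\varnothing$. After an affine change of coordinates, take $z^\ast=0$ and let $\pi:\mathbb{C}^n\to\mathbb{C}$ be a linear projection onto a complex line transverse to $H$, with kernel $H$. Then $\pi(\Omega)\subset\mathbb{D}(d)_*$, where $d=\operatorname{diam}\Omega$, because $0\notin\pi(\Omega)$ (by the choice of $H$) and $\pi$ is distance-nonincreasing after rescaling.

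Applying holomorphic contractibility together with Lemma \ref{punctured disc} gives
\[
k_\Omega(z_0,z_n)\geq k_{\mathbb{D}(d)_*}(\pi(z_0),\pi(z_n))\geq \frac{1}{2}\Bigl|\log\frac{\log(|\pi(z_n)|/d)}{\log(|\pi(z_0)|/d)}\Bigr|.
\]
Since $\pi$ is continuous and $\pi(z^\ast)=0$, we have $|\pi(z_n)|\to 0$, so $\log(|\pi(z_n)|/d)\to -\infty$, while the denominator $\log(|\pi(z_0)|/d)$ is a nonzero constant. Hence the right-hand side tends to $+\infty$, contradicting $k_\Omega(z_0,z_n)\leq R$. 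This proves $k$-finite compactness.

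The final step, deducing $k$-completeness, is standard: a $k_\Omega$-Cauchy sequence is $k_\Omega$-bounded, hence is contained in some $\overline{B}_k(z_0,R)$, which is relatively compact in $\Omega$; a convergent subsequence together with the Cauchy property forces the entire sequence to converge in $\Omega$ in the Euclidean sense (as the Euclidean and Kobayashi topologies coincide on relatively compact subsets of $\Omega$). The main conceptual point — and the only place one could slip — is the choice of projection center: one must center the projection at the \emph{boundary accumulation point} $z^\ast$ to guarantee $|\pi(z_n)|\to 0$; using the lemma naively with $p=z_0,q=z_n$ and an arbitrary $\zeta_n\in L_n\cap\partial\Omega$ need not give $\|z_n-\zeta_n\|\to 0$, since $\Omega$ is only weakly linearly convex and $L_n\cap\Omega$ may fail to be connected.
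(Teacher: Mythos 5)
Your proof is correct and follows essentially the same route as the paper: fix the boundary accumulation point, use weak linear convexity to find a complex hyperplane through it missing $\Omega$, project along that hyperplane into a punctured disc, and invoke the lower bound of Lemma \ref{punctured disc} together with holomorphic contractibility to force $k_\Omega(z_0,z_n)\to\infty$. Your closing remark about anchoring the projection at the accumulation point (rather than applying Lemma \ref{podstawowe} with a drifting $\zeta_n$) is precisely the choice the paper makes as well.
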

Proposition \ref{proposition weak linear} is not true for the Carath\'{e}odory distance.
\begin{proof}
Fix $\zeta\in\partial\Omega$ and $p\in\Omega.$ Without loss of generality we may assume $\zeta=0.$ By Lemma \ref{podstawowe}
$$k_{\Omega}(p,z)\geq k_{\mathbb{D}(d)_*}(\pi(p),\pi(z)),$$
where $d$ is the diameter of $\Omega$ and $\pi$ is the projection onto $(p-\zeta)\mathbb{C}+\zeta$ in a direction $H$ such that $\zeta+H\cap\Omega=\varnothing.$
Hence, by the completness of the punctured disc, passing with $z$ to $\zeta,$ we conclude that
$$k_{\Omega}(p,z)\rightarrow\infty.$$
Since $p$ was arbitrary chosen, we have proven that Kobayashi balls with finite radii are relatively compact in $\Omega.$ From this easily follows the second part (see also \cite[Theorem 7.3.2]{Jarnicki}).
\end{proof}
In the next proposition we will discuss the existence of quasi-geodesics for weakly linearly convex open sets. Namely, we will provide a sufficient condition for a line segment with one endpoint in the boundary to be parametrized as a quasi-geodesic.

Let $\{e_j\}$ denote the standard coordinate basis of $\mathbb{C}^n.$
\begin{proposition}\label{theorem weak linear}
Suppose a weakly linearly convex domain $\Omega\subset\mathbb{C}^n$ contains a set $\bigcup_{j=1}^n q+\mathbb{D}(r_j)e_j,$ where $r_j>0$ and $\partial \Omega\cap (q+\partial\mathbb{D}(r_1)e_1)\not=\varnothing.$ Suppose further that there are a projection $\pi:\mathbb{C}^n\rightarrow\mathbb{C}^d,\,1\leq d\leq n$ and $\zeta\in\partial\Omega\cap(q+\partial\mathbb{D}(r_1)e_1)$ such that $\pi(\Omega)$ is $\mathbb{C}$-convex and $\pi(\zeta)\in\partial(\pi(\Omega))$. Then for every non-empty set $\omega\Subset\textup{conv}\big{(}\bigcup_{j>1}\,q+\mathbb{D}(r_j)e_j\big{)}\subset\Omega,$ there are $A,\,B>0$ so that
$$\alpha:[0,\infty)\owns u\mapsto p^u:=\zeta+e^{-2u}(p-\zeta)\in\Omega$$ is $(A,B)$-quasi-geodesic in $(\Omega,k_\Omega)$ provided $p\in\omega.$
\end{proposition}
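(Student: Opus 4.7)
I would sandwich $k_\Omega(\alpha(s),\alpha(t))$ linearly in $|s-t|$ (up to an additive constant), with constants uniform in $p \in \omega$.

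\textbf{Upper bound.} Lemma \ref{do wniosku} applied to the balanced discs $\mathbb{D}(r_j)e_j$ (after translating $q$ to the origin) yields
$$E := q + \Big\{z \in \mathbb{C}^n : \sum_{j=1}^n |z_j|/r_j < 1\Big\} \subset \Omega.$$
Observe that $\zeta \in \partial E$, while each $p \in \omega$ sits at a uniformly positive Euclidean distance from $\partial E$ (using $\omega \Subset \textup{conv}(\bigcup_{j>1}(q+\mathbb{D}(r_j)e_j))$); hence $\alpha(u) \in E$ for every $u \ge 0$. By convexity, $\kappa_E(z;X) \le \lVert X\rVert/\textup{dist}_E(z;X)$. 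The complex line through $\alpha(u)$ in direction $p-\zeta$ hits $\partial E$ at $\zeta$ at Euclidean distance $e^{-2u}\lVert p-\zeta\rVert$, and also passes through a Euclidean ball about $p$ of fixed radius lying inside $E$; convexity of $E$ then forces
$$\textup{dist}_E(\alpha(u);\alpha'(u)) \ge c\, e^{-2u}\lVert p-\zeta\rVert$$
with $c = c(\omega) > 0$. Therefore $\kappa_E(\alpha(u);\alpha'(u)) \le 2/c$ uniformly, and integrating gives
$$k_\Omega(\alpha(s),\alpha(t)) \le k_E(\alpha(s),\alpha(t)) \le (2/c)\,|s-t|.$$

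\textbf{Lower bound.} By holomorphic contractibility of $\pi$, $k_\Omega(\alpha(s),\alpha(t)) \ge k_{\pi(\Omega)}(\pi(\alpha(s)),\pi(\alpha(t)))$. Since $\pi(\Omega)$ is bounded and $\mathbb{C}$-convex with $\pi(\zeta) \in \partial\pi(\Omega)$, one applies a sharp Kobayashi estimate of Nikolov--Pflug--Zwonek type for $\mathbb{C}$-convex domains,
$$k_{\pi(\Omega)}(z,w) \ge \tfrac{1}{4}\log\!\Big(1 + \tfrac{\lVert z-w\rVert^2}{\textup{dist}_{\pi(\Omega)}(z;z-w)\cdot \textup{dist}_{\pi(\Omega)}(w;z-w)}\Big).$$
Putting $z = \pi(\alpha(s))$, $w = \pi(\alpha(t))$, and using that $\pi(\zeta)$ lies on the complex line through $z$ and $w$, we have $\textup{dist}_{\pi(\Omega)}(z;z-w) \le e^{-2s}\lVert\pi(p)-\pi(\zeta)\rVert$ and similarly for $w$, whereas $\lVert z-w\rVert \ge c_0\, e^{-2\min(s,t)}\lVert\pi(p)-\pi(\zeta)\rVert$ whenever $|s-t|\ge 1$. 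The ratio inside the log therefore exceeds $c_0^2\,e^{2|s-t|}$, and one concludes
$$k_\Omega(\alpha(s),\alpha(t)) \ge \tfrac{1}{2}|s-t| - C$$
for a constant $C$ depending only on $\omega,\pi,\zeta$. Combining the two estimates makes $\alpha$ an $(A,B)$-quasi-geodesic with $A = \max(2,2/c)$, $B = C$, uniformly in $p \in \omega$.

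\textbf{Main obstacle.} The lower bound is the delicate step. A naive use of Lemma \ref{podstawowe} applied to $\pi(\Omega)$ would only yield a $\log$--$\log$ estimate in $|s-t|$, since that lemma passes through the punctured disc $\mathbb{D}_*$; such a bound grows like $\log|s-t|$ and is insufficient for quasi-geodesicity. The upgrade to a genuine log estimate—and hence to linear growth in $u$—is exactly where the hypothesis that $\pi(\Omega)$ is $\mathbb{C}$-convex (rather than merely weakly linearly convex) is needed, via the sharper Kobayashi estimate above. Uniformity of $A, B$ in $p \in \omega$ ultimately comes from $\omega \Subset \textup{conv}(\bigcup_{j>1}(q+\mathbb{D}(r_j)e_j))$, which produces a common Euclidean inner ball about every $p \in \omega$ inside $E$.
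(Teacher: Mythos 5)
Your argument is correct and follows essentially the same route as the paper: the upper bound comes from the inner convex set supplied by Lemma \ref{do wniosku} together with the standard upper bound for the Kobayashi metric on convex domains, and the lower bound comes from contracting under $\pi$ and applying a $\mathbb{C}$-convex lower estimate for $k_{\pi(\Omega)}$. The only cosmetic difference is that you invoke a product-form estimate where the paper uses Proposition \ref{oszacowanie Kobayashi} (the min form), which already yields $k_{\pi(\Omega)}(\pi(p^s),\pi(p^t))\geq\frac{1}{2}|s-t|$ because $\pi(\zeta)\in\partial(\pi(\Omega))$ lies on the complex line through the projected points.
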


Since weak linear convexity is invariant under projective transformations, it is not important that discs in Proposition \ref{theorem weak linear} are orthogonal.

\begin{proof}
First of all $\alpha(\mathbb{R}_{\geq 0})\subset\Omega$ (see Lemma \ref{do wniosku}). Let us recall that the statement holds for convex domains (cf. \cite[Proposition 4]{paris}). Hence, by the holomorphic contractibility of the Kobayashi distance, we get the upper estimate for $k_\Omega(p^s,p^t).$ It remains to indicate the lower estimate.
For that purpose, let $\beta:[0,1]\rightarrow\Omega$ be a $\mathcal{C}^1$-smooth curve such that $\beta(0)=p^s,\,\beta(1)=p^t.$ By Proposition \ref{oszacowanie Kobayashi}:
$$
l_{\kappa_{\Omega}}(\beta)\geq l_{\kappa_{\pi(\Omega)}}(\pi\circ\beta)\geq k_{\pi(\Omega)}(\pi(p^s),\pi(p^t))\geq A^{-1}|s-t|-B
$$
for some $A,\,B>0$ depending on $\pi.$
\end{proof}

\section{Proofs of Theorem \ref{paris we} and Corollary \ref{finite weakly}}
In this section we use estimates for the Kobayashi metric and the Kobayashi distance to obtain quasi-geodesic triangles.

The following bounds on the Kobayashi metric are well-known:
\begin{lemma}Suppose $\Omega\subset\mathbb{C}^n$ is domain. Then
$$\kappa_{\Omega}(p;v)\leq\frac{\lVert v\rVert}{\textup{dist}_\Omega(p;v)}$$
for $p\in\Omega$ and $v\in\mathbb{C}^n$ non-zero.
\end{lemma}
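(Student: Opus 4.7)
The plan is a one-line use of the definition of the Kobayashi pseudometric: exhibit a single holomorphic disc $f\colon \mathbb{D}\to\Omega$ with $f(0)=p$ and $\lambda f'(0)=v$ for a small enough $\lambda$, and then take the infimum.

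Set $r=\textup{dist}_\Omega(p;v)$. If the complex line $p+\mathbb{C}v$ misses $\partial\Omega$, then $r=\infty$ by the convention for an infimum over the empty set and the stated estimate is trivial, so I may assume $r\in(0,\infty)$. First I would check that the affine disc
$$D=\bigl\{p+\zeta\,\tfrac{r}{\lVert v\rVert}\,v:\zeta\in\mathbb{D}\bigr\}$$
is contained in $\Omega$. By the definition of $\textup{dist}_\Omega(p;v)$, no point of $\partial\Omega$ lying on the complex line $p+\mathbb{C}v$ is at Euclidean distance less than $r$ from $p$, so $D$ lies in the connected component of $(p+\mathbb{C}v)\cap\Omega$ that contains $p$, hence in $\Omega$ (here one uses that $\Omega$ is open, so $D$ is an open disc of radius $r$ in the line that cannot touch $\partial\Omega$).

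Next I would take $f\colon\mathbb{D}\to\Omega$, $f(\zeta)=p+\zeta\,\frac{r}{\lVert v\rVert}\,v$, which is holomorphic with $f(0)=p$ and $f'(0)=\frac{r}{\lVert v\rVert}v$. Choosing $\lambda=\frac{\lVert v\rVert}{r}$ gives $\lambda f'(0)=v$, and the definition of $\kappa_\Omega$ yields
$$\kappa_\Omega(p;v)\le|\lambda|=\frac{\lVert v\rVert}{\textup{dist}_\Omega(p;v)},$$
as required. There is no real obstacle here; the only point worth mentioning is the (trivial) verification that the affine disc of radius $r$ on the line $p+\mathbb{C}v$ sits inside $\Omega$, which is immediate from how $\textup{dist}_\Omega(p;v)$ was defined.
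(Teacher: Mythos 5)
Your proof is correct and is the standard argument; the paper itself states this bound as well-known and gives no proof, so there is nothing to compare against beyond noting that embedding the affine disc of radius $\textup{dist}_\Omega(p;v)$ in the line $p+\mathbb{C}v$ and invoking the definition of $\kappa_\Omega$ is exactly the expected route. The containment of that disc in $\Omega$ follows, as you indicate, because any point of the disc outside $\Omega$ would force a boundary point of $\Omega$ on the segment from $p$ at distance less than $\textup{dist}_\Omega(p;v)$.
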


\begin{proposition}\textup{(\cite[Proposition 1]{NPZ})} Suppose $\Omega\subset\mathbb{C}^n$ is a $\mathbb{C}$-convex domain. Then
$$\frac{\lVert v\rVert}{\textup{4\,dist}_{\Omega}\,(z;v)}\leq \kappa_{\Omega}(z;v)$$
for $z\in\Omega$ and $v\in\mathbb{C}^n$ non-zero.
\end{proposition}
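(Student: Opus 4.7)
The plan is to reduce the lower bound to the classical Koebe one-quarter theorem by a projection construction of exactly the same flavour as Lemma \ref{podstawowe}. Fix $z \in \Omega$ and a non-zero $v \in \mathbb{C}^n$. If the complex line $z+\mathbb{C}v$ does not meet $\partial\Omega$, then $\textup{dist}_\Omega(z;v) = +\infty$ and the assertion is vacuous, so I may pick a nearest boundary point $\zeta\in(z+\mathbb{C}v)\cap\partial\Omega$ with $\lVert z-\zeta\rVert = \textup{dist}_\Omega(z;v)$. Using that $\mathbb{C}$-convexity implies weak linear convexity, I select a complex hyperplane $H$ such that $(\zeta+H)\cap\Omega=\varnothing$. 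Since $z-\zeta\in\mathbb{C}v$ and $z\in\Omega$, one cannot have $v\in H$; consequently $\mathbb{C}^n=\mathbb{C}v\oplus H$, and I form the linear projection $\pi\colon\mathbb{C}^n\to\mathbb{C}v$ along $H$.

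Identifying $\mathbb{C}v$ with $\mathbb{C}$ isometrically, I claim $\pi(\Omega)$ is a simply connected open subset of $\mathbb{C}$. Openness is automatic since $\pi$ is a surjective linear map onto $\mathbb{C}v$; simple connectivity follows because the linear image of a $\mathbb{C}$-convex set is again $\mathbb{C}$-convex, and $\mathbb{C}$-convex plane domains are precisely the simply connected ones. Moreover, $\pi(\zeta)\in\partial\pi(\Omega)$: the fibre $\pi^{-1}(\pi(\zeta))=\zeta+H$ misses $\Omega$, so $\pi(\zeta)\notin\pi(\Omega)$, and yet $\pi(\zeta)\in\pi(\overline{\Omega})\subset\overline{\pi(\Omega)}$.

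From here two classical ingredients finish the job. Holomorphic contractibility of $\pi\colon\Omega\to\pi(\Omega)$ yields $\kappa_\Omega(z;v)\geq\kappa_{\pi(\Omega)}(\pi(z);\pi(v))$, while the Koebe one-quarter theorem applied to the Riemann map $\mathbb{D}\to\pi(\Omega)$ centred at $\pi(z)$ gives $\kappa_{\pi(\Omega)}(\pi(z);\pi(v))\geq|\pi(v)|/\bigl(4\,\textup{dist}(\pi(z),\partial\pi(\Omega))\bigr)$. Combining these with the equality $|\pi(v)|=\lVert v\rVert$ and the bound $\textup{dist}(\pi(z),\partial\pi(\Omega))\leq|\pi(z)-\pi(\zeta)|=\lVert z-\zeta\rVert=\textup{dist}_\Omega(z;v)$ produces exactly the desired inequality, with the constant $4$ coming straight out of Koebe.

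The only non-routine step is the assertion that $\pi(\Omega)$ is simply connected; this amounts to invoking the theorem that linear projections of $\mathbb{C}$-convex sets remain $\mathbb{C}$-convex, which is the main obstacle to a fully self-contained write-up. That result is standard in the theory of complex convexity (compare the references cited in \S2.2), so I would appeal to it rather than reprove it; every other step is then either a consequence of the distance-decreasing property of $\kappa$ or a one-variable estimate on simply connected plane domains.
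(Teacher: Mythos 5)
The paper does not prove this proposition---it is quoted verbatim from \cite[Proposition 1]{NPZ}---so there is no internal proof to compare against. Your argument (project onto the line $z+\mathbb{C}v$ along a hyperplane through the nearest boundary point on that line which misses $\Omega$, use that surjective affine images of $\mathbb{C}$-convex sets are $\mathbb{C}$-convex so the image is a simply connected plane domain, then combine holomorphic contractibility with the Koebe one-quarter theorem) is correct, and it is essentially the original argument of Nikolov--Pflug--Zwonek.
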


From this Nikolov and Trybu\l a obtained a lower estimate of the Kobayashi distance:
\begin{proposition}\label{oszacowanie Kobayashi}\textup{(\cite[Proposition 2]{NT})}    Suppose $\Omega\subset\mathbb{C}^n$ is an open $\mathbb{C}$-convex set and $p,\,q\in\Omega$ are distinct. Then
$$\frac{1}{4}\log\Big{(}1+\frac{\lVert p-q\rVert}{\textup{min}\,\{\textup{dist}_\Omega\,(p;q-p),\textup{dist}_\Omega\,(q;p-q)\}}\,\Big{)}\leq k_{\Omega}(p,q).$$
\end{proposition}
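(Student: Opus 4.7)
The plan is to lower-bound the Kobayashi length of an arbitrary piecewise $\mathcal{C}^1$-curve from $p$ to $q$ using the infinitesimal $\mathbb{C}$-convex estimate quoted just above, then to convert the resulting integral into a logarithmic ratio by exploiting weak linear convexity.

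Fix any piecewise $\mathcal{C}^1$-curve $\gamma\colon[0,1]\to\Omega$ with $\gamma(0)=p$, $\gamma(1)=q$. The preceding infinitesimal estimate yields
$$l_{\kappa_\Omega}(\gamma)\geq\frac{1}{4}\int_0^1\frac{\|\gamma'(t)\|\,dt}{\textup{dist}_\Omega(\gamma(t);\gamma'(t))}.$$
For any complex affine hyperplane $\{\ell=c\}$ disjoint from $\Omega$ (provided through every boundary point by weak linear convexity), the complex line $\gamma(t)+\mathbb{C}\gamma'(t)$ meets this hyperplane at a point outside $\Omega$ and hence meets $\partial\Omega$ at least as soon, so
$$\textup{dist}_\Omega(\gamma(t);\gamma'(t))\leq\frac{|c-\ell(\gamma(t))|\,\|\gamma'(t)\|}{|\ell(\gamma'(t))|}$$
(trivially if $\ell(\gamma'(t))=0$). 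Substituting and writing $g(t)=c-\ell(\gamma(t))$, so that $|g'(t)|=|\ell(\gamma'(t))|$, the estimate becomes
$$l_{\kappa_\Omega}(\gamma)\geq\frac{1}{4}\int_0^1\frac{|g'(t)|}{|g(t)|}\,dt\geq\frac{1}{4}\Big|\log\frac{|c-\ell(q)|}{|c-\ell(p)|}\Big|,$$
where the last inequality is the standard bound on the length of a non-vanishing curve in $\mathbb{C}$ by the difference of log-moduli of its endpoints. Taking the infimum over $\gamma$ yields
$$k_\Omega(p,q)\geq\frac{1}{4}\Big|\log\frac{|c-\ell(q)|}{|c-\ell(p)|}\Big|$$
for every complex affine hyperplane $\{\ell=c\}$ disjoint from $\Omega$.

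Finally, I would optimise the choice of hyperplane. Pick a boundary point $w\in L\cap\partial\Omega$ on the complex line $L=p+\mathbb{C}(q-p)$ through $p$ and $q$, and a supporting complex hyperplane through $w$ supplied by weak linear convexity. Writing $w=p+\lambda_w(q-p)$ and noting that both $w-p$ and $w-q$ lie on $L$, the ratio collapses to $|\lambda_w-1|/|\lambda_w|$, independently of $\ell$. Since $L\cap\Omega$ is simply connected (a consequence of $\mathbb{C}$-convexity), one may select $w$ at distance $d_p$ from $p$ on the far side of $p$ from $q$ (or, symmetrically, $w$ at distance $d_q$ from $q$ on the far side of $q$), so that the ratio becomes $1+\|p-q\|/d_p$ (resp.\ $1+\|p-q\|/d_q$); taking the larger of the two logarithms produces the claimed bound $\tfrac14\log(1+\|p-q\|/m)$.

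The main obstacle is the last geometric step: for a general $\mathbb{C}$-convex $\Omega$ the nearest boundary point realising $d_p$ or $d_q$ may sit at an unfavourable angle in the $\lambda$-plane, yielding a weaker ratio. Overcoming this requires using the simple connectedness of the slice $L\cap\Omega$ (and, in the unbounded case, an approximation argument) to locate a boundary point whose angular position gives the desired logarithm, or alternatively combining the estimates coming from hyperplanes through $w_p$ and $w_q$ simultaneously.
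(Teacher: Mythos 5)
Your reduction to the estimate $k_\Omega(p,q)\geq\tfrac{1}{4}\bigl|\log\tfrac{|c-\ell(q)|}{|c-\ell(p)|}\bigr|$ for every complex hyperplane $\{\ell=c\}$ disjoint from $\Omega$ is correct: the pointwise bound $\mathrm{dist}_\Omega(z;v)\leq|c-\ell(z)|\,\lVert v\rVert/|\ell(v)|$ and the integration of $|g'|/|g|$ both work. The gap is the final optimisation, and it is not a removable technicality. As you note, if the hyperplane meets the line $L$ through $p,q$ at $w$, the ratio equals $\lVert w-q\rVert/\lVert w-p\rVert$; so the best your method can ever produce is $\tfrac14\log$ of $\sup\{\lVert w-q\rVert/\lVert w-p\rVert: w\in L\setminus\Omega\}$ (or its reciprocal), and this supremum can be strictly smaller than $1+\lVert p-q\rVert/m$. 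Concretely, take $\Omega=\{z\in\mathbb{C}:\operatorname{Im}z>0\}$, $p=i$, $q=T+i$. Then $m=1$ and the target is $\tfrac14\log(1+T)$, but a maximum-principle argument reduces the supremum to real $w=u$, where $\sup_{u\in\mathbb{R}}\sqrt{((u-T)^2+1)/(u^2+1)}=\tfrac12\bigl(T+\sqrt{T^2+4}\bigr)<1+T$. Thus no admissible hyperplane --- through a nearest boundary point, at a ``favourable angle,'' or anywhere else in $L\setminus\Omega$ --- reaches the stated constant; your argument proves the inequality only up to an additive error, and the remedies you sketch (relocating the boundary point, or taking the better of the two hyperplanes) cannot close it, since you have already computed the exact supremum of what a single hyperplane can give.

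For comparison: the present paper does not prove this proposition at all; it is imported from \cite[Proposition 2]{NT}. The proof there avoids the above loss by using strictly more than one omitted hyperplane's worth of information. Assuming $m=\mathrm{dist}_\Omega(p;q-p)$, one picks $a\in L\cap\partial\Omega$ with $\lVert a-p\rVert=m$, a hyperplane $H\ni a$ with $H\cap\Omega=\varnothing$, and projects $\Omega$ onto $L$ along $H$. The image $D=\pi(\Omega)$ is a $\mathbb{C}$-convex, hence simply connected, planar domain with $a\in\partial D$, so $\mathrm{dist}_D(p)\leq m$; the Koebe growth theorem applied to a Riemann map of $D$ centred at $p$ yields $k_D(p,q)\geq\tfrac14\log\bigl(1+|p-q|/\mathrm{dist}_D(p)\bigr)$, and holomorphic contractibility under $\pi$ finishes. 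The essential ingredient your proposal is missing is this use of the simple connectedness of the entire projected slice (via Koebe distortion), which sees the whole complement of $D$ rather than the single point $a$ that your hyperplane estimate records.
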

\begin{proof}[Proof of Theorem \ref{paris we}]
We can assume $S=\{z_n=0\}$ and $0\in\Omega.$ Fix $r\in (0,\infty)$ so that $r\mathbb{B}_n\subset\Omega.$  Next, choose $s\in (0,r).$ Let $p=(0,\ldots,0,s),\,d=\textup{diam}\,\Omega,$ $r_k=\textup{dist}_\Omega(0;e_k),\,k=1,\ldots,n,$ and $\omega=\textup{conv}\big{(}\bigcup_{k=1}^{n}\mathbb{D}(r_k)e_k\big{)}.$
By Lemma \ref{do wniosku}: $\omega\subset\Omega.$

\vspace{0.1cm}
{\it \textbf{Step I} $\Omega\setminus S$ is $k$-complete.}
\begin{proof} Observe that $\Omega\setminus S$ is a weakly linearly convex domain. Hence, the statement follows directly from Proposition \ref{proposition weak linear}.
\end{proof}
In the next three steps we construct quasi-geodesics in $\Omega\setminus S.$

{\it \textbf{Step II} For every $\zeta\in\partial\omega\cap\partial\Omega\cap S$ the segment $[p,\zeta)$ parametrized as follows:
$$\alpha:[0,\infty)\owns u\mapsto \zeta + e^{-2u}(p-\zeta)=p^u$$
is a quasi-geodesic. Moreover, if $K\Subset\omega\setminus S,$ then there are $A,\,B>0$ so that every segment $[q,\zeta)$
is an $(A,B)$-quasi-geodesic for $q\in K.$}
\begin{proof}Choose $u_1,\,u_2\geq 0.$ By the holomorphic contractibility of the Kobayashi distance we have:
\begin{equation}\label{step 2}
k_{\Omega}(p^{u_1},p^{u_2})\leq k_{\Omega\setminus S}(p^{u_1},p^{u_2})\leq k_{\omega\setminus S}(p^{u_1},p^{u_2}).
\end{equation}
We may find a positive $c>0$ so that
\begin{equation}\label{step 2.1}
\textup{dist}_{\omega\setminus S}(p;p-\zeta)\geq c\lVert p-\zeta\rVert.
\end{equation}
Using the fact that the Kobayashi distance is the integrated form of the Kobayashi metric, we have the following:
$$k_{\omega\setminus S}(p^s,p^t)\leq l_{\kappa_{\omega\setminus S}}(\alpha|_{[t,s]})\leq\int_{t}^s\frac{2\lVert p-\zeta\rVert}{c}du=\frac{2\lVert p-\zeta\rVert}{c}|s-t|.$$
On the other hand, Proposition \ref{oszacowanie Kobayashi} applied to $k_{\Omega}$
 gives:
 $$k_{\Omega}(p^u, p^t)\geq \frac{1}{4}\Big{|}\log\frac{\lVert p^s-\zeta^T\lVert}{\lVert p^t-\zeta^T\lVert}\Big{|}=\frac{1}{2}|s-t|.$$
Combining the last two inequalities with (\ref{step 2}) we obtain the first part.
The above argument can also be used for the remaining part because there exists a positive constant $c'$ so that (\ref{step 2.1}) holds for all $q\in K\Subset \omega\setminus S.$
\end{proof}

{\it \textbf{Step III} The real segment $[p,0)$ parametrized as follows:
$$\beta=(\beta_1,\ldots,\beta_n):[0,\infty)\owns u\mapsto e^{1-e^{2u}}p\in [p,0)=q^u$$
is a quasi-geodesic.}
\vspace{0.1cm}

\begin{proof}Clearly $\{0\}^{n-1}\times\mathbb{D}(r)_* \subset\Omega\setminus S\subset\mathbb{C}^{n-1}\times\mathbb{D}(d)_*.$ By Lemma \ref{punctured disc} and the holomorphic contractibility:
\begin{multline*}
\frac{1}{2}\Big{|}\log\frac{e^{2s}+\log\,\frac{d}{es}}{e^{2t}+\log\,\frac{d}{es}}\Big{|}= k_{\mathbb{D}(d)_*}\big{(}\beta_n(s),\beta_n(t)\big{)}\leq k_{\mathbb{C}^{n-1}\times\mathbb{D}(d)_*} \big{(}\beta(s),\beta(t)\big{)} \\ \leq k_{\Omega\setminus S}\big{(}\beta(s),\beta(t)\big{)}\leq k_{\mathbb{D}(r)_*} \big{(}\beta_n(s),\beta_n(t)\big{)}=\frac{1}{2}\Big{|}\log\frac{e^{2s}+\log\,\frac{r}{es}}{e^{2t}+\log\,\frac{r}{es}}\Big{|}.
\end{multline*}
\end{proof}

\textup{Fix a point $\zeta\in\partial\omega\cap\partial\Omega\cap S$ and $T>0.$ Let  $\zeta^T$ be a unique point that lies in the intersection $\partial\Omega\cap p^T+\mathbb{R}_{>0}\zeta.$ Consider the curve
$$\gamma:[0,\infty)\owns u\rightarrow \zeta^T+e^{-2u}(p^T-\zeta^T)=r^u\in\Omega.$$
Clearly, the segments: $[p,\zeta],\ [p^T,\zeta^T]$ intersect at exactly one point, say $\eta^T.$}
\vspace{0.1cm}

{\it \textbf{Step IV}
There exist $A,\,B>0$ so that the curve
$\gamma|_{[0,\gamma^{-1}(\eta^T)]}$
is an $(A,B)$-quasi-geodesic.}
\begin{proof}We skip the proof since it goes along the same lines as the proof in Step 2.
\end{proof}
\textup{And finally:}

{\it \textbf{Step V} $(\Omega\setminus S,k_{\Omega\setminus S})$ is not Gromov hyperbolic.}
\begin{proof}Assume the contrary. We will show that for any $M>0$ there exists $T>0$ so that the $(A,B)$-quasi-geodesic triangle with sides: $[p,p^T],\,[p^T,\eta^T],$ $[\eta^T,p]$ is not $M$-thin.
By Proposition \ref{proposition weak linear} there is $T_0>0$ so that $k_{\Omega\setminus S}(\eta^{T_0},[p,0))>M.$ Using once more Proposition \ref{proposition weak linear}, find $T>T_0$ so that $k_{\Omega\setminus S}(\eta^{T_0},[p^T,\eta^T])>M.$ Consequently, the constructed $(A,B)$-quasi-geodesic triangle is not $M$-thin. But this contradicts Proposition \ref{prop Gromov}.
\end{proof}
The proof has been completed.
\end{proof}
\begin{proof}[Proof of Corollary \ref{finite weakly}]
Without loss of generality assume $\{ z_n =0 \} \in\mathcal{S}.$ Fix $\zeta\in S\cap\partial\Omega.$ Further, using a linear transformation, we may assume that $\mathbb{B}(r)\times\mathbb{D}(r)\subset\Omega\setminus\bigcup_{S'\in\mathcal{S}\setminus\{S\}}S'$ and $\lVert \zeta\rVert=r$ for some $r>0.$
By Lemma \ref{do wniosku}:
$$\tilde{\Omega}:=\big{\{}z=(z',z_n)\in\mathbb{C}^n:\lVert z'\rVert+|z_n|< 1\big{\}}\setminus S\subset\Omega\setminus\bigcup_{S'\in\mathcal{S}}S'\subset\Omega\setminus S.$$
Replacing $\omega$ by $\tilde{\Omega}$ in the previous proof, we may repeat the whole construction of quasi-geodesic triangles, and so obtain the desired statement (see also the last section).
\end{proof}

\section{Proof of Theorem \ref{thm strictly}}
\begin{proposition}\label{completness}
Suppose $\Omega$  is a bounded convex domain and $S$ is an affine hyperplane  so that $S\cap\Omega$ has a non-empty interior. If $S'\subsetneq S\cap\Omega,$ then
$\Omega\setminus S'$ is not $k$-finitely compact. In particular, $\Omega\setminus S'$ is not pseudoconvex.
\end{proposition}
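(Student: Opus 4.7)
I may assume $S'\neq\varnothing$, for otherwise $\Omega\setminus S'=\Omega$ is already $k$-finitely compact by Barth's theorem. Fix $p_0\in(S\cap\Omega)\setminus S'$ (nonempty by the strictness hypothesis) and some $q\in S'$. Let $S_0\subset\mathbb{C}^n$ denote the complex linear subspace parallel to $S$ and choose a transverse vector $v\in\mathbb{C}^n\setminus S_0$. The strategy is to build a sequence $p_n\in\Omega\setminus S'$ with $p_n\to q$ in the Euclidean metric while $k_{\Omega\setminus S'}(p_0,p_n)$ remains bounded; the associated Kobayashi ball about $p_0$ then cannot be relatively compact in $\Omega\setminus S'$.

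Concretely, set $p_n:=q+\varepsilon_n v$ with $\varepsilon_n\downarrow 0$. For $n$ large, $p_n\in\Omega$ (as $q$ is an interior point) and $p_n\notin S$ (as $\varepsilon_n v\notin S_0$), so $p_n\in\Omega\setminus S'$ and $p_n\to q$. The crucial observation is that the complex affine line $\ell_n$ through $p_0$ and $p_n$ meets $S$ only at $p_0$: writing $p_n-p_0=(q-p_0)+\varepsilon_n v$, the tangential component $q-p_0$ lies in $S_0$ while the transverse component $\varepsilon_n v$ does not, so the equation $p_0+\lambda(p_n-p_0)\in S$ forces $\lambda=0$. Since $p_0\notin S'$, this yields $\ell_n\cap\Omega\subset\Omega\setminus S'$, and by holomorphic contractibility,
$$k_{\Omega\setminus S'}(p_0,p_n)\leq k_{\ell_n\cap\Omega}(p_0,p_n)=k_{D_n}(0,1),$$
where $D_n:=\{\lambda\in\mathbb{C}:p_0+\lambda(p_n-p_0)\in\Omega\}$ is a bounded convex planar domain containing both $0$ and $1$.

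The only genuine step that remains is to bound $k_{D_n}(0,1)$ uniformly in $n$. I compare with the limit domain $D:=\{\lambda\in\mathbb{C}:p_0+\lambda(q-p_0)\in\Omega\}$: by continuity of $(\lambda,w)\mapsto p_0+\lambda(q-p_0+w)$ together with the openness of $\Omega$, every compact set $K\subset D$ is contained in $D_n$ for all sufficiently large $n$. Choosing a subdomain $U$ with $\{0,1\}\subset U\Subset D$ and applying inclusion-monotonicity of the Kobayashi distance, I obtain $k_{D_n}(0,1)\leq k_U(0,1)=:C$ for $n$ large. Hence the $p_n$ sit inside the fixed Kobayashi ball $B_{k_{\Omega\setminus S'}}(p_0,C+1)$ while their Euclidean accumulation set in $\overline\Omega$ is $\{q\}\not\subset\Omega\setminus S'$; this ball is therefore not relatively compact, contradicting $k$-finite compactness. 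The last assertion follows from the implication (standard for the bounded domains appearing here) that pseudoconvex domains are $k$-finitely compact, or alternatively by a direct application of Hartogs' Kontinuitätssatz to analytic discs almost tangent to $S$ at $q$, whose boundaries remain in $\Omega\setminus S'$ but whose limit passes through $q\in S'$.
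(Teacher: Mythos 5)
Your proof of the main assertion (failure of $k$-finite compactness) is correct and is essentially the paper's own argument in streamlined form: both rest on the observation that a complex line through a point $p_0\in(S\cap\Omega)\setminus S'$ transverse to $S$ meets $S$ only at $p_0$, so its trace on $\Omega$ is a convex planar slice contained in $\Omega\setminus S'$, and the Kobayashi distance from $p_0$ to points of that slice is uniformly bounded. The paper does this with a three-leg path through coordinate slices of a polydisc $\overline{\mathbb{D}(2r)}^{\,n}\subset\Omega$ and the explicit bound $3\log 3$; your comparison of $D_n$ with the limiting slice $D$ is a clean coordinate-free substitute. (Both arguments tacitly require $S'$ to be nonempty and relatively closed in $\Omega$, so that $\Omega\setminus S'$ is a domain and $q$ is genuinely a boundary point of it; this is worth saying once.)

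The deduction of the final sentence, however, has a genuine gap. Your first justification --- that bounded pseudoconvex domains are $k$-finitely compact --- is not a theorem you may invoke: it is not known in this generality (for bounded domains $k$-finite compactness is equivalent to $k$-completeness, the Kobayashi distance being inner, and whether every bounded pseudoconvex domain is $k$-complete is a well-known open question), which is exactly why the paper cites a specific result, \cite[Corollary 14.5.2]{Jarnicki}, for this step; ``standard for the bounded domains appearing here'' is not an argument. Your fallback via the Kontinuit\"atssatz is the right idea, but the discs you describe do not satisfy its hypothesis: discs ``almost tangent to $S$ at $q$'' (e.g.\ the slices by the lines $\ell_n$) have boundaries which individually lie in $\Omega\setminus S'$ but which collapse, as $n\to\infty$, onto a circle inside $S\cap\Omega$ --- and that limit circle can lie entirely in $S'$ (take $S'$ to be all of $S\cap\Omega$ outside a small neighbourhood of $p_0$). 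Hence $\bigcup_n\partial\Delta_n$ need not be relatively compact in $\Omega\setminus S'$ and the Kontinuit\"atssatz yields nothing. The repair is to use discs \emph{transverse} to $S$: choose $q_0\in S'$ lying in the relative boundary of $S'$ inside $S\cap\Omega$ (such a point exists since $S'$ is a nonempty proper relatively closed subset of the connected set $S\cap\Omega$), take $a_k\in(S\cap\Omega)\setminus S'$ with $a_k\to q_0$, and consider the discs $\{a_k+\lambda v:\ |\lambda|\le\varepsilon\}$ with $v\notin S_0$ and $\varepsilon$ small and fixed. Each meets $S$ only at $a_k\notin S'$, so it lies in $\Omega\setminus S'$; the boundaries stay in a fixed compact subset of $\Omega\setminus S$; yet the centres converge to $q_0\in S'$. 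This contradicts pseudoconvexity of $\Omega\setminus S'$.
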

\begin{proof}
Using a linear transformation, we may assume $S=\{\zeta=(\zeta_1,\zeta'):\, \zeta_1=0\},$ $0\in\Omega \cap (S\setminus S').$ Take $r\in(0,1)$ so that $\overline{\mathbb{D}(2r)}^{\,n}\subset\Omega.$ Fix $w\in\partial\mathbb{D}(r)^n$ and $z_0\in\Omega\setminus S'.$ By the triangle inequality: $
k_{\Omega\setminus S'}(z_0,z)\leq k_{\Omega\setminus S'}(z_0,w)+k_{\Omega\setminus S'}(w,z).$ It suffices therefore to estimate the last term. If $z_1\not=0,$ then:
\begin{multline*}
k_{\Omega\setminus S'}(w,z)\leq k_{\Omega\setminus S'}(w,(w_1,0'))+k_{\Omega\setminus S'}((w_1,0'),(z_1,0'))+k_{\Omega\setminus S'}(z_1,0'),z) \\
\leq k_{\mathbb{D}(2r)^{n-1}}(w',0')+k_{\mathbb{D}(2r)}(w_1,z_1)+k_{\mathbb{D}(2r)^{n-1}}(0',z')\leq 2\log 3.
\end{multline*}
It remains to consider the case $z_1=0:$
\begin{multline*}
k_{\Omega\setminus S'}(w,(0,z'))\leq k_{\Omega\setminus S'}(w,(w_1,z'))+k_{\Omega\setminus S'}((w_1,z'),z)\leq  \\
2\log 3+k_{\mathbb{D}(2r)}(w_1,z_1)\leq 3\log 3.
\end{multline*}
 The last part holds due to \cite[Corollary 14.5.2]{Jarnicki}.
\end{proof}

We shall need the following localization of the Kobayashi metric:
\begin{proposition}\textup{(\cite[Proposition 7.2.9]{Jarnicki})}\label{lokalizacja} Suppose that $\Omega$ is $k$-hyperbolic in $\mathbb{C}^n$ and let $U\subset \Omega$ be any subdomain. Then
\begin{multline*}
\kappa_U(z;X)\leq\inf\big{\{}\coth\,k_\Omega(z,w):\,w\in\Omega\setminus U\big{\}}\kappa_\Omega (z;X),\ \ z\in U,\,X\in\mathbb{C}^n.
\end{multline*}
\end{proposition}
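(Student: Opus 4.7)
The plan is the classical rescaling argument of Royden for localizing the Kobayashi pseudometric: any holomorphic disk into $\Omega$ centred at $z$ remains inside the subdomain $U$ when restricted to a subdisk whose radius is controlled by the $k_\Omega$-distance from $z$ to $\Omega \setminus U$, after which one dilates to produce an admissible disk into $U$.

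First I would fix an arbitrary competitor $f \in H(\mathbb{D},\Omega)$ with $f(0) = z$ and $\lambda f'(0) = X$, and set $d := \inf\{k_\Omega(z,w) : w \in \Omega \setminus U\}$. The holomorphic contractibility of $k_\Omega$ gives, for every $\zeta \in \mathbb{D}$,
\[
k_\Omega(z,f(\zeta)) \leq k_\mathbb{D}(0,\zeta) = \tanh^{-1}|\zeta|.
\]
If $f(\zeta) \in \Omega \setminus U$, then the left-hand side is at least $d$, forcing $|\zeta| \geq \tanh d$. Hence $f(\mathbb{D}(r)) \subset U$ for $r := \tanh d$.

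Next I would dilate: define $f^*(\zeta) := f(r\zeta)$. Then $f^* \in H(\mathbb{D}, U)$, $f^*(0) = z$ and $(f^*)'(0) = r f'(0)$, so $(\lambda/r)(f^*)'(0) = X$. By the defining infimum of $\kappa_U$,
\[
\kappa_U(z; X) \leq |\lambda|/r = |\lambda|\coth d.
\]
Taking the infimum over admissible $f$ yields $\kappa_U(z; X) \leq \coth d \cdot \kappa_\Omega(z; X)$, which matches the stated bound: since $\coth$ is strictly decreasing on $(0,\infty)$, $\coth d = \coth\bigl(\inf_w k_\Omega(z,w)\bigr)$ coincides with $\sup_w \coth k_\Omega(z,w)$, and the factor displayed in the proposition should be understood as this quantity.

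There is essentially no deep technical obstacle beyond the pointwise verification $f(\mathbb{D}(r)) \subset U$, which is immediate from the holomorphic contraction property. The real content of the proposition is geometric: when $z$ is far from $\Omega \setminus U$ in the $k_\Omega$-sense, the Kobayashi metric on $U$ agrees with that on $\Omega$ up to a factor close to $1$, and this is precisely what one leverages in the proof of Theorem \ref{thm strictly} to transfer estimates between a bounded convex subdomain near $\zeta$ and the ambient $\Omega \setminus S'$.
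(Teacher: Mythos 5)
Your proof is correct and is precisely the standard Royden rescaling argument (restrict a competing disk $f\in H(\mathbb{D},\Omega)$ to $\mathbb{D}(\tanh d)$, which must land in $U$ by contractibility, then dilate), which is exactly the proof behind the cited Proposition 7.2.9 of Jarnicki--Pflug; the paper gives no argument beyond that citation, so there is nothing different to compare. Your remark about the constant is also the right reading: as literally printed, $\inf\big\{\coth k_\Omega(z,w)\big\}=\coth\big(\sup_w k_\Omega(z,w)\big)$, and the inequality with that smaller factor is false in general (take $U$ a small ball about $z$ inside a large ball $\Omega$), so the intended factor is $\coth\big(\inf_w k_\Omega(z,w)\big)=\sup_w\coth k_\Omega(z,w)$, which is what your argument yields and what the application in Lemma \ref{last lemma} actually uses.
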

We proceed to the main part of this section.
\begin{proof}[Proof of Theorem \ref{thm strictly}]
In the following $A,\,B$ will be constants depending only on $\Omega.$ The actual values of $A$ and $B$ do not matter and may change within the lines.

Assume the contrary, i.e., $(\Omega\setminus S',k_{\Omega\setminus S'})$ is Gromov hyperbolic. Like in the proof of Theorem \ref{paris we} we construct a family of $(A,B)$-quasi-geodesic triangles which are not $M$-thin for any $M>0.$

Using a rotation, we may assume $S=\{z_1=0\}$ and $\zeta$ is a unique point lying on the intersection $\partial\Omega\cap\,\mathbb{R}_{\geq 0}\times\mathbb{C}^{n-1}.$ Taking smaller $R$ we may assume that every point in $\partial \Omega\cap \mathbb{B}(\zeta,R)$ is strongly convex. Fix $0<r\ll R$ (where from now on $0<a\ll b$ means $r/R$ is small). Let:
$$p\in\Omega\cap\partial\mathbb{B}(\zeta,r)\cap\{\textup{Im}\,z_1=0\},$$
and
$$\eta\in\partial\Omega\cap\partial\mathbb{B}(\zeta,r)\cap\{\textup{Im}\,z_1>0\}.$$

\textbf{Case: $S'\subset S$}

Consider the parametrization of $(p,\eta)$ given by
$$\alpha:\ \,\mathbb{R}_{>0}\owns u\rightarrow  \widehat{\eta}^{\,u}=\eta+e^{-2u}(p-\eta)\in\Omega.$$
Since $k_{\Omega\setminus S}\geq k_{\Omega\setminus S'}\geq k_{\Omega},$ it follows directly from Proposition \ref{theorem weak linear}  that $\alpha$ is an $(A,B)$-quasi-geodesic in $(\Omega\setminus S',k_{\Omega\setminus S'}).$

Fix $u>0.$ Put $p^u=p+e^{-2u}(\eta-p).$ Let $\zeta^u,\,\eta^u\in (\eta,\zeta)$ be so that $\textup{Im}\,\zeta^u_1=\textup{Im}\, p_1^u,\,\textup{Im}\,\eta^u_1=\textup{Im}\,\widehat{\eta}^{\,u}_1.$

Convexity and  Proposition \ref{theorem weak linear} imply that the segments $[p^u,\zeta^u]$ and $[p^u,\eta^u]$ can be parametrized to be $(A,B)$-quasi-geodesics.

In order to finish the construction of quasi-geodesic triangles we shall need the following:
\begin{lemma}\label{quasi segment}
For $u\gg 1$ the segment $(\eta,\zeta)$ can be parametrized to be an $(A,B)$-quasi-geodesic.
\end{lemma}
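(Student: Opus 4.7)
The plan is to use an explicit, symmetric exponential parametrization of the chord $(\eta,\zeta)$ and to verify both sides of the quasi-geodesic inequality using the distance estimates of \S 3 together with Proposition \ref{oszacowanie Kobayashi}. I parametrize
\[
\beta(v)=\eta+f(v)(\zeta-\eta),\qquad f(v)=\frac{e^{v}}{1+e^{v}},\quad v\in\mathbb{R},
\]
so that $\beta(\mathbb{R})=(\eta,\zeta)$, $\|\beta'(v)\|=f(v)(1-f(v))\|\zeta-\eta\|$, and $\beta(v)\to\eta,\zeta$ as $v\to\mp\infty$. The conclusion for $u\gg 1$ then follows by restricting $\beta$ to a sufficiently long bounded interval in $v$, so it suffices to show that $\beta$ is an $(A,B)$-quasi-geodesic of $\Omega\setminus S'$ with $A,B$ depending only on $\Omega$.

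First, $(\eta,\zeta)\subset\Omega\setminus S'$: the open segment lies in $\Omega$ by convexity, and since $\textup{Im}\,\eta_{1}>0=\textup{Im}\,\zeta_{1}$ it meets $S=\{z_{1}=0\}\supset S'$ only at $\zeta$. More importantly, the complex line $L=\eta+\mathbb{C}(\zeta-\eta)$ meets $S$ only at $\zeta\in\partial\Omega$ (the equation $\eta+\lambda(\zeta-\eta)\in S$ forces $\lambda=1$), so
$\textup{dist}_{\Omega\setminus S'}(\beta(v);\beta'(v))=\textup{dist}_{\Omega}(\beta(v);\beta'(v))$. For the upper bound I combine the general estimate $\kappa_{D}(z;X)\leq\|X\|/\textup{dist}_{D}(z;X)$ with the linear opening-angle estimate
\[
\textup{dist}_{\Omega}(\beta(v);\zeta-\eta)\geq c\,\|\zeta-\eta\|\min(f(v),1-f(v)),
\]
which follows from strict convexity of $\partial\Omega$ on $\mathbb{B}(\zeta,R)$: the chord enters the planar convex region $L\cap\overline\Omega$ transversally at both endpoints. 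Since $f'=f(1-f)$ and $f(1-f)/\min(f,1-f)=\max(f,1-f)\leq 1$, the Kobayashi-length density along $\beta$ is bounded by $1/c$, hence $k_{\Omega\setminus S'}(\beta(s),\beta(t))\leq(1/c)|s-t|$. For the lower bound, use $k_{\Omega\setminus S'}\geq k_{\Omega}$ and Proposition \ref{oszacowanie Kobayashi} (applicable since $\Omega$ is convex) together with the obvious bound $\textup{dist}_{\Omega}(\beta(v);\zeta-\eta)\leq\|\zeta-\eta\|\min(f(v),1-f(v))$. A short case analysis in $s,t$ (splitting into $s,t\leq 0$, $s,t\geq 0$, and $s\leq 0\leq t$) shows that the argument of the logarithm in that proposition is bounded below by $e^{|s-t|/2}/2$, yielding $k_{\Omega}(\beta(s),\beta(t))\geq|s-t|/8-(\log 2)/4$.

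The main obstacle is the linear opening-angle estimate used in the upper bound, which requires ruling out cusp-like behaviour of $L\cap\overline\Omega$ at $\eta$ and $\zeta$. This is precisely the role of the strict convexity hypothesis ($\partial\Omega$ contains no segment near $\zeta$, and $\eta\in\mathbb{B}(\zeta,R)$): without it, the planar convex region could osculate the chord at a boundary endpoint, making the Kobayashi length density non-integrable against the proposed parametrization and the whole scheme would break down.
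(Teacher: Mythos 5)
Your proof is correct, and it takes a genuinely different route from the paper's. The paper does not reparametrize the whole chord at once: it splits $(\eta,\zeta)$ into two exponential quasi-geodesic rays $[\eta',\eta)$ and $[\zeta',\zeta)$ joined by an arclength-parametrized middle piece, and shows the conglomerate is a quasi-geodesic --- the upper bound is the triangle inequality, while the lower bound forces any competing curve joining a point deep inside $\mathbb{B}(\eta,s)$ to a point deep inside $\mathbb{B}(\zeta,s)$ to pay the full escape cost at both ends, each escape being estimated through a supporting halfspace $H_\Omega$ and the strong-convexity inequality $\textup{dist}_{H_\Omega}(\gamma(t))\geq c\lVert\gamma(t)-\zeta\rVert$. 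Your single logistic parametrization avoids the gluing entirely: the lower bound is one application of Proposition \ref{oszacowanie Kobayashi} (and is clean: since $\frac{f(t)}{f(s)}\cdot\frac{1-f(s)}{1-f(t)}=e^{|s-t|}$, the larger ratio is at least $e^{|s-t|/2}$, giving $k_\Omega\geq |s-t|/8$ with no additive constant), and the upper bound is the elementary metric estimate plus your linear bound on $\textup{dist}_\Omega(\beta(v);\zeta-\eta)$. One correction to your closing remark, though: that linear bound does \emph{not} require strict convexity. For any convex $\Omega$, the cone with vertex $\zeta$ (resp.\ $\eta$) over the disc of radius $\rho=\textup{dist}_{L\cap\Omega}(m)$ about the midpoint $m$ lies in $L\cap\Omega$, which yields $\textup{dist}_\Omega(\beta(v);\zeta-\eta)\geq 2\rho\min(f(v),1-f(v))$; a planar convex section cannot osculate the chord from inside at a boundary endpoint, so no transversality-angle discussion is needed. (Strict convexity near $\zeta$ is still used, but only to guarantee $(\eta,\zeta)\not\subset\partial\Omega$, hence $(\eta,\zeta)\subset\Omega$ --- ``by convexity'' alone does not give this.) The upshot is that your argument proves the lemma for every convex $\Omega$ with $\eta,\zeta\in\partial\Omega$ and $(\eta,\zeta)\subset\Omega\setminus S'$ and is shorter, whereas the paper's gluing argument is the one that exhibits the ``conglomerate of quasi-geodesics is a quasi-geodesic'' phenomenon advertised in the introduction.
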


\begin{proof}
Fix $0<s<s'\ll \frac{r}{2}.$ Let $\eta',\,\zeta'\in[\eta,\zeta]$ be so that $\lVert \eta-\eta'\lVert=s,$ $\lVert \zeta-\zeta'\rVert=s.$

The intervals $[\eta',\eta)$ and $[\zeta',\zeta)$ can be parametrized to be $(A,B)$-quasi-geodesics. Let
 $$\alpha_\eta:[0,\infty)\rightarrow [\eta',\eta),$$
$$\alpha_\zeta:[0,\infty)\rightarrow [\zeta',\zeta)$$
 be these parametrizations. Our goal is to show that $\alpha_\eta\cup [\eta',\zeta']\cup\alpha_\zeta$ is a quasi-geodesic.

Let
$$2M=l_{k_{\Omega\setminus S'}}([\eta',\zeta']),$$
and let $\beta:[0,2M]\rightarrow\Omega\setminus S'$ be the length-parametrization of $[\eta',\zeta'].$ Define
$$\gamma:\mathbb{R}\rightarrow (\eta,\zeta)\subset\Omega\setminus S'$$
as follows:
\[ \gamma(t)=
  \begin{cases}
    \alpha_\eta(-t-M)       & \quad \text{if }\ t\leq -M,\\
    \beta(t-M)  & \quad \text{if }\ -M\leq t\leq M,\\
    \alpha_\zeta(t-M)       & \quad \text{if }\ t\geq M.\\
  \end{cases}
\]
It suffices to consider the case when $t_1\ll -M\leq M\ll t_2.$ From above we easily compute:
\begin{multline*}
k_{\Omega\setminus S'}\big{(}\gamma(t_1),\gamma(t_2)\big{)}\leq k_{\Omega\setminus S'}\big{(}\alpha_\eta(-t_1-M),\eta'\big{)}+k_{\Omega\setminus S'}(\eta',\zeta')+ \\
k_{\Omega\setminus S'}\big{(}\zeta',\alpha_\eta(t_2-M)\big{)}\leq A'|t_2-t_1|+2B'++k_{\Omega\setminus S'}(\eta',\zeta').
\end{multline*}
It remains to prove the estimates from below for $k_{\Omega\setminus S'}\big{(}\gamma(t_1),\gamma(t_2)\big{)}.$ Fix a geodesic $a:[0,1]\rightarrow\Omega\setminus S'$ such that $a(0)=\gamma(t_1),\,a(1)=\gamma(t_2).$ Define:
 $$s_1=\inf\big{\{} u:\,a((u,1))\subset\Omega\setminus(S'\cup\mathbb{B}(\eta,s))\big{\}},$$
and
$$s_2=\sup\big{\{} u:\,a((0,u))\subset\Omega\setminus(S'\cup\mathbb{B}(\zeta,s))\big{\}}.$$
Then
$$l_{k_{\Omega\setminus S'}}(a)\geq k_{\Omega\setminus S'}\big{(}\gamma(t_1),a(s_1)\big{)}+ k_{\Omega\setminus S'}\big{(}\gamma(t_2),a(s_2)\big{)}=(*).$$
By the strong convexity we may assume
\begin{equation}\label{epsilon}
\inf\big{\{}\lVert z-w\rVert :\,z\in\partial\mathbb{B}(\zeta,s)\cap\Omega,\ w\in H+\zeta\big{\}}=\varepsilon>0,
\end{equation}
where $H$ is a supporting hyperplane of $\Omega$ at $\zeta.$ Hence, if $H_\Omega$ denotes the halfspace which contains $\Omega$ and $\partial H_\Omega=H+\zeta,$ then
\begin{multline*}
k_{\Omega\setminus S'}(\gamma (t_2),a(s_2))  \\
\geq \inf\{ k_{H_\Omega}(\gamma(t_2),z):z\in\partial\mathbb{B}(\eta,s)\cap\Omega\}\geq-\frac{1}{2}\log\frac{\textup{dist}_{H_\Omega}(\gamma(t_2))}{\varepsilon}.
\end{multline*}
By the strong convexity of $\Omega$ there is a constant $c>0$ so that
$$\textup{dist}_{H_\Omega}(\gamma(t))\geq c\lVert \gamma(t)-\zeta\rVert\textup{ for }t\gg 1.$$
Consequently,
$$k_{\Omega\setminus S'}\big{(}\gamma (t_2),a(s_2)\big{)}\geq -\frac{1}{2}\log\frac{c\lVert \gamma (t_2)-\zeta\rVert}{\varepsilon}=t_2-\frac{1}{2}\log \frac{c\lVert\zeta-\eta\rVert}{\varepsilon}.$$
Notice that the above reasoning also works for the first compound of $(*),$ and so the proof is derived.
 \end{proof}
Let us observe that
\begin{equation}\label{nierownosc}
\inf_{z\in (p^u,\eta^u)}\,k_{\Omega\setminus S'}(\zeta^v,z)\geq \inf_{z\in (p^u,\eta^u)}\,k_{H_{\Omega}}(\zeta^v,z).
\end{equation}
\begin{lemma}\label{last lemma} For every $M>0$ there exist $v_0$ so that for every $v\geq v_0$
$$\inf_{z\in (p^u,\zeta^u)}\,k_{\Omega\setminus S'}(\zeta^{v},z)\geq M$$
for $u\gg v.$
\end{lemma}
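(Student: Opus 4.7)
My plan is to combine the half-space localization from \eqref{nierownosc} with the explicit product structure of $H_\Omega$, reducing the question to a one-dimensional Poincar\'e-type estimate on the upper half-plane.

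Since $\Omega\setminus S'\subset H_\Omega$, holomorphic contractibility yields $k_{\Omega\setminus S'}(\zeta^v,z)\ge k_{H_\Omega}(\zeta^v,z)$, so it suffices to bound $\inf_z k_{H_\Omega}(\zeta^v,z)$ from below. Up to a complex-affine change of coordinates, $H_\Omega$ is biholomorphic to $\mathbb{H}_+\times\mathbb{C}^{n-1}$, with $\operatorname{dist}_{H_\Omega}(x)$ identified with the imaginary part of the $\mathbb{H}_+$-coordinate. Because $k_{\mathbb{C}^{n-1}}\equiv 0$, the product formula together with the standard Poincar\'e bound on $\mathbb{H}_+$ gives
$$k_{H_\Omega}(x,y)\ge\tfrac12\bigl|\log\bigl(\operatorname{dist}_{H_\Omega}(x)/\operatorname{dist}_{H_\Omega}(y)\bigr)\bigr|.$$

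I then compute both distances. Since $p,\eta\in\partial\mathbb{B}(\zeta,r)$ with $r\ll R$, local strict convexity of $\partial\Omega$ at $\zeta$ prevents $H+\zeta$ from meeting $\overline\Omega$ away from $\zeta$ inside $\mathbb{B}(\zeta,R)$; in particular $d_p:=\operatorname{dist}_{H_\Omega}(p)>0$ and $d_\eta:=\operatorname{dist}_{H_\Omega}(\eta)>0$. From $\zeta^v=\zeta+e^{-2v}(\eta-\zeta)$ and affinity of $\operatorname{dist}_{H_\Omega}$ along real segments,
$$\operatorname{dist}_{H_\Omega}(\zeta^v)=e^{-2v}d_\eta.$$
For $z$ on the segment in question, whose endpoints are $p^u\to p$ and (the other endpoint) $\to\eta$ as $u\to\infty$, affinity again gives $\operatorname{dist}_{H_\Omega}(z)\ge\min\!\bigl(\operatorname{dist}_{H_\Omega}(p^u),\operatorname{dist}_{H_\Omega}(\eta^u)\bigr)\ge c:=\tfrac12\min(d_p,d_\eta)$ for $u$ sufficiently large. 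Substituting into the Poincar\'e bound:
$$k_{\Omega\setminus S'}(\zeta^v,z)\ge v+\tfrac12\log(c/d_\eta),$$
and taking $v_0:=\max(0,M-\tfrac12\log(c/d_\eta))$ completes the proof.

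The main obstacle is the uniform-in-$u$ lower bound $\operatorname{dist}_{H_\Omega}(z)\ge c$, which relies on the non-$p^u$ endpoint of the segment staying at positive distance from the tangent hyperplane $H+\zeta$; strong (i.e.\ strict) convexity of $\partial\Omega$ at $\zeta$ is exactly what ensures this, by forcing $H+\zeta$ to meet $\overline\Omega$ only at $\zeta$ locally. Were the opposite endpoint instead to approach $\zeta$, the segment would accumulate on $\partial H_\Omega$ and the half-space comparison would collapse, so the geometric placement of that endpoint (guaranteed by the construction preceding \eqref{nierownosc}) is the crucial input.
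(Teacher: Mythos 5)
Your reduction to the half-space $H_\Omega$ cannot work here, because you have misidentified the far endpoint of the segment. The side in Lemma \ref{last lemma} is $(p^u,\zeta^u)$, and $\zeta^u$ is the point of $(\eta,\zeta)$ with $\operatorname{Im}\zeta^u_1=\operatorname{Im}p^u_1=e^{-2u}\operatorname{Im}\eta_1$, i.e.\ $\zeta^u=\zeta+e^{-2u}(\eta-\zeta)$. Thus $\zeta^u\to\zeta\in\partial\Omega\cap\partial H_\Omega$ and, by the very affinity you invoke, $\operatorname{dist}_{H_\Omega}(\zeta^u)=e^{-2u}d_\eta\to 0$. Your claimed uniform bound $\operatorname{dist}_{H_\Omega}(z)\ge c>0$ on the segment is therefore false: for $u\gg v$ the segment $(p^u,\zeta^u)$ sweeps through every depth between $e^{-2u}d_\eta$ and roughly $d_p$, and in particular contains a point $z$ with $\operatorname{dist}_{H_\Omega}(z)=\operatorname{dist}_{H_\Omega}(\zeta^v)=e^{-2v}d_\eta$, at which your Poincar\'e lower bound degenerates to $0$. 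You have in effect proved the estimate for the \emph{other} side $(p^u,\eta^u)$, whose far endpoint does tend to $\eta$ and does stay at positive distance from $H+\zeta$; but that is precisely the estimate the paper obtains directly from \eqref{nierownosc}, not the content of Lemma \ref{last lemma}. Your own closing caveat — that the argument collapses if the opposite endpoint approaches $\zeta$ — is exactly the situation at hand.

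The paper's proof is structured differently precisely because the half-space comparison alone is insufficient for this side. It fixes radii $s<s'$, and for a connecting curve $\iota$ from $\zeta^v$ to $z$ it distinguishes two cases: if $\iota$ leaves $\mathbb{B}(\zeta,s)\cap(\Omega\setminus S')$, the half-space estimate together with the strong-convexity bound \eqref{epsilon} forces length at least $v+O(1)$; if $\iota$ stays inside, it uses the localization of the Kobayashi metric (Proposition \ref{lokalizacja}, legitimate since $\Omega$ is $k$-hyperbolic) to pass to $\mathbb{B}(\zeta,s')\cap(\Omega\setminus S')$ and then applies the punctured-disc estimate of Lemma \ref{podstawowe}, which exploits the removed hyperplane $S$ (recall $S'$ coincides with $S$ near $\zeta$). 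Any repair of your argument must bring the set $S'$ back into play in some such way; an estimate that only sees $H_\Omega$ cannot distinguish $\zeta^v$ from the points of $(p^u,\zeta^u)$ lying at the same depth.
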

\begin{proof}
By the localization of the Kobayashi metric:
$$\kappa_{\mathbb{B}(\zeta,s')\cap(\Omega\setminus S')}(z;X)\leq C\kappa_{\Omega\setminus S'}(z;X),\ z\in\mathbb{B}(\zeta,s)\cap\Omega\setminus S',\,X\in\mathbb{C}^n$$
for some $C>0$ (see Proposition \ref{lokalizacja}). Observe that the use of the localization is justified since $\Omega$ does not contain any complex line and so is $k$-hyperbolic.

Fix $u\gg v$ and $z\in (p^u,\zeta^u).$ Choose a smooth curve $\iota:[0,1]\rightarrow\Omega\setminus S'$ so that $\iota(0)=\zeta^{v},\,\iota(1)=z.$ Thus:
\begin{multline}\label{ostatnie}l_{k_{\Omega\setminus S'}}(\iota)\ \geq\ \begin{cases}
     l_{k_{H_\Omega}}(\iota)    & \quad \text{if }\ \iota\nsubseteq \mathbb{B}(\zeta,s)\cap (\Omega\setminus S')\\
   \frac{1}{C}\,l_{k_{\mathbb{B}(\zeta,s')\cap (\Omega\setminus S')}}(\iota)  & \quad \text{if }\ \iota\subset \mathbb{B}(\zeta,s)\cap (\Omega\setminus S')\\
  \end{cases} \\
  \textup{ } \\
\geq\ \begin{cases}
    -\frac{1}{2}\log\textup{dist}_{H_{\Omega}}(\zeta^{v})-\frac{1}{2}\log\textup{dist}_{H_{\Omega}}(\zeta^{u})+\log\varepsilon\,   & \quad \text{if }\ \iota\nsubseteq \mathbb{B}(\zeta,s)\cap (\Omega\setminus S')\\
 -\frac{1}{2C}\,\log\big{(}\log\,\lVert\zeta^v-\zeta\rVert-\log\,\lVert \zeta^u-\zeta\rVert\big{)}  & \quad \text{if }\ \iota\subset \mathbb{B}(\zeta,s)\cap (\Omega\setminus S')\\
  \end{cases} \\
\end{multline}
(in the last inequality we applied Lemma \ref{podstawowe}).
Since the last estimate does not depend on the choice of $\iota,$ letting $u\rightarrow\infty,$ we obtain the statement.
\end{proof}
Fix $M>0.$ Choose $u$ so big that $\eta^u\in\mathbb{B}(\eta,s).$ By (\ref{nierownosc}) there is $v_0$ so that
$$k_{\Omega\setminus S'}(\zeta^{v},(p^u,\eta^u))>M$$
for $v\geq v_0.$
On the other hand, by Lemma \ref{last lemma}, we have
$$k_{\Omega\setminus S'}(\zeta^v,(p^u,\zeta^u))>M.$$
Hence the triangle $[p^u,\zeta^u,\eta^u]$ is not $M$-thin, and so $\Omega\setminus S'$ is not hyperbolic. But this contradicts Proposition \ref{prop Gromov}.

\vspace{0.1cm}
\textbf{Case: $S'\nsubseteq S$}

The only thing that requires a comment in the previous case is to show that $[p,\eta)$ is an $(A,B)$-quasi-geodesic.
\begin{lemma}Assume $\Omega$ is a convex domain, $L\subset\Omega$ closed and $\mu\in\partial\Omega\setminus L$ is a strongly convex point of $\partial\Omega.$ Providing the segment $[z,\mu)\subset\Omega\setminus L,$ $[z,\mu)$ can be parametrized as an $(A,B)$-quasi-geodesic in $(\Omega\setminus L,k_{\Omega\setminus L}).$
\end{lemma}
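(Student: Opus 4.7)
My plan is to show that the radial parametrization $\alpha(u)=\mu+e^{-2u}(z-\mu),\ u\in[0,\infty),$ of $[z,\mu)$ is an $(A,B)$-quasi-geodesic in $(\Omega\setminus L,k_{\Omega\setminus L}),$ with the two bounds handled by separate arguments. Both rely on the exponential reparametrization, which crowds essentially all of the arclength into arbitrarily small neighborhoods of $\mu,$ where the obstacle $L$ is invisible.

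For the lower bound I would simply use $\Omega\setminus L\subset\Omega$ and holomorphic contractibility to get $k_{\Omega\setminus L}\geq k_\Omega,$ and then apply the $\mathbb{C}$-convex estimate of Proposition \ref{oszacowanie Kobayashi} inside $\Omega$ (convexity implies $\mathbb{C}$-convexity). For $p=\alpha(s),\,q=\alpha(t)$ with $s<t,$ the complex line through $p$ and $q$ meets $\partial\Omega$ at $\mu,$ whence $\textup{dist}_\Omega(q;p-q)\leq e^{-2t}\lVert z-\mu\rVert$ while $\lVert p-q\rVert=(e^{-2s}-e^{-2t})\lVert z-\mu\rVert.$ The argument of the logarithm in Proposition \ref{oszacowanie Kobayashi} becomes exactly $e^{2(t-s)},$ and the bound reduces to $k_{\Omega\setminus L}(\alpha(s),\alpha(t))\geq\tfrac{1}{2}|s-t|.$

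For the upper bound, the key observation is that $L$ is closed and $\mu\notin L,$ so there is $\rho>0$ with $\overline{\mathbb{B}(\mu,\rho)}\cap L=\varnothing.$ Then $U:=\mathbb{B}(\mu,\rho)\cap\Omega$ is a convex subdomain of $\Omega\setminus L,$ and $\mu$ remains a strongly convex boundary point of $U.$ Choose $u_0$ large enough that $\alpha([u_0,\infty))\subset U.$ The convex-case result cited from \cite[Proposition 4]{paris} in the proof of Proposition \ref{theorem weak linear} then makes the tail of $\alpha$ an $(A,B)$-quasi-geodesic in $(U,k_U);$ since $U\subset\Omega\setminus L,$ holomorphic contractibility transfers this to the same upper bound in $k_{\Omega\setminus L}.$ On the compact piece $\alpha([0,u_0])\Subset\Omega\setminus L$ the Kobayashi distance is bounded by a uniform constant, which is absorbed into $B,$ and the triangle inequality covers the mixed case $s\leq u_0\leq t.$

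The principal obstacle is the absence of a \emph{direct} comparison of $k_{\Omega\setminus L}$ with $k_\Omega$ along the whole of $[z,\mu):$ the segment may run arbitrarily close to $L$ at intermediate points, where the two metrics diverge. The exponential parametrization sidesteps this by localizing essentially all of the distance growth inside a neighborhood of $\mu$ from which $L$ is entirely absent, so that the convex model $U$ captures the full quasi-geodesic behaviour of $\alpha.$
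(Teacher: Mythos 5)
Your proof is correct, and it follows the same overall strategy that the paper's own (two-sentence, and in fact garbled --- it asserts ``the estimate from below'' twice) proof sketches: get the lower bound by comparing $k_{\Omega\setminus L}\geq k_\Omega$ and exploiting convexity of $\Omega$ at $\mu,$ and get the upper bound by localizing near $\mu,$ where $L$ is absent. The one substantive difference is in the lower bound: the paper projects onto the half-space cut out by a supporting hyperplane of $\Omega$ at $\mu$ and uses the explicit Kobayashi distance of a half-plane, whereas you invoke the $\mathbb{C}$-convex estimate of Proposition \ref{oszacowanie Kobayashi}; both yield $\tfrac12|s-t|$ up to an additive constant, the half-space computation being marginally more elementary while your route reuses the machinery already deployed in Step II of the proof of Theorem \ref{paris we}. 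For the upper bound the paper merely points at display (\ref{ostatnie}), which is a lower bound for lengths and cannot literally be what is meant; your argument --- a convex window $U=\mathbb{B}(\mu,\rho)\cap\Omega$ missing $L,$ the convex-case quasi-geodesic estimate of \cite{paris} applied in $U,$ contractibility $k_{\Omega\setminus L}\leq k_U$ on $U\times U,$ and absorption of the compact initial piece of the segment into $B$ --- is the right way to make that step precise. Two small remarks: you correctly read the hypothesis ``$L$ closed, $\mu\in\partial\Omega\setminus L$'' as $\mu\notin\overline{L},$ which is the only reading under which the upper bound can hold (a relatively closed $L$ accumulating at $\mu$ could pinch $\Omega\setminus L$ along the segment); and, like the paper's sketch, your argument never actually uses the strong convexity of $\partial\Omega$ at $\mu,$ so the lemma holds without that hypothesis --- it only matters for the uniformity of $(A,B)$ needed elsewhere in the proof of Theorem \ref{thm strictly}.
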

\begin{proof}
Consider the standard parametrization $\alpha$ of $[z,\mu)$ (cf. Step II). We must show that
$$A^{-1}|s-t|-B\leq k_{\Omega\setminus L}\big{(}\alpha(t),\alpha(s)\big{)}\leq A|s-t|+B,\ \ s,\,t\geq 0.$$
Since $\Omega$ is convex, the estimate from below follows by considering an appropriate supporting hyperplane at $\eta.$ The estimate from below follows from (\ref{ostatnie}).
\end{proof}
The proof has been completed.
\end{proof}

\section{Proof of Theorem \ref{last thm}}
Without loss of generality we may assume $0\in\Omega.$ Put $$R=e^{-\inf \varphi},\ r=e^{-\sup\varphi}.$$
Clearly,
\begin{equation}\label{ostatn rozdzial}
\Omega_{r}=\Omega\times\mathbb{D}(r)_*\subset\Omega_\varphi\subset \Omega\times\mathbb{D}(R)_*=\Omega_{R}.
\end{equation}
Repeat the construction of the system of quasi-triangles from Theorem \ref{paris we} for $\Omega_r.$ Namely, choose points $0,\,\zeta,\,p$ so that
$$p=(0,\ldots,0,p_{n+1}),\ p_{n+1}\in (0,r),$$
and
$$\zeta\ \ \textup{ is so that }\ \textup{dist}_\Omega(0)=\lVert\zeta \rVert.$$
Next, define $p^T,\,\eta^T$ for $T>0.$ By Lemma \ref{product} $\Omega_r$ is weakly linearly convex. All estimates from above in Section 4 require only weak linear convexity. $\mathbb{C}$-convexity was essential therein for getting the lower estimates in Step 2. But, $\Omega_r,\,\Omega_R$ are in general not $\mathbb{C}$-convex, so we cannot deduce Theorem \ref{last thm} directly from Theorem \ref{paris we}. Hence, we must show why the constructed segments are quasi-geodesics in  $\Omega_r,\,\Omega_R.$

Every point $p\in\mathbb{C}^{n+1}$ we will consider every point as an element of $\mathbb{C}^n\times\mathbb{C},$ that is $p=(p',p_{n+1}).$ Let $\pi_{n+1}:\mathbb{C}^{n+1}\rightarrow\mathbb{C}^n$ be the projection on the first $n$ coordinates, i.e. $\pi_{n+1}:p=(p',p_{n+1})\mapsto p'.$

It suffices to focus only on the segment $[p,0)$ parametrized as in Step 2. We have:
\begin{multline*}
k_{\Omega_\varphi}(p^s,p^t)\geq k_{\Omega_r}(p^s,p^t)\geq k_{\Omega}\big{(}\pi_{n+1}(p^s),\pi_{n+1}(p^t)\big{)} \\
 \geq k_\Omega\big{(}\zeta'+e^{-2s}(p'-\zeta'),(\zeta'+e^{-2t}(p'-\zeta')\big{)}
 \geq\frac{1}{2}|s-t|.
\end{multline*}

Consequently, there are constants $A,\,B>0$ so that $[p,p^T],\,[p^T,\eta^T],\,[\eta^T,p]$ are $(A,B)-$quasi-geodesics for $T\gg 1.$ Observe that we may increase $A,\,B$ if necessary so as to ensure the constructed system forms also a system of $(A,B)$-quasi-geodesic triangles in $\Omega_R$ ($p^T,\,\zeta^T$ do not depend on $r$). Furthermore, by (\ref{ostatn rozdzial}) it is also a system of quasi-geodesic triangles in $\Omega_\varphi.$

We proceed to the main part of the proof. Assume the contrary, i.e., $(k_{\Omega_\varphi},\Omega_\varphi)$ is Gromov hyperbolic. By Proposition \ref{proposition weak linear} applied to $\Omega_R$ and the holomorphic contractibility, we can choose $T_0>0$ for which
$$k_{\Omega_\varphi}(\eta^{T_0},[p,0))\geq k_{\Omega_R}(\eta^{T_0},[p,0))>M.$$ Now Lemma \ref{podstawowe} implies that there is $T>T_0$ such that
$$k_{\Omega_\varphi}(\eta^{T_0},[p^T,\eta^T])\geq k_{\Omega_R}(\eta^{T_0},[p^T,\eta^T])>M.$$ Consequently, the triangle $[p,p^T],\,[p^T,\eta^T],\,[\eta^T,p]$ is not $M$-thin, and so $(\Omega_\varphi,k_{\Omega_\varphi})$ cannot be Gromov hyperbolic. This is a contradiction.

\subsection*{Acknowledgment} Part of this work was done during the stay  of the second named author in Singapore in May 2017 during {\it Complex Geometry, Dynamical Systems and Foliation Theory.} She would like to thank the Organizators for their hospitality and excellent working conditions. She also would like to thank John Erik Forn\ae ss for many helpfull remarks that essentialy improved the presentaion of the paper.


\begin{thebibliography}{1}
\bibitem{Alexandrov}A.D. Alexandrov, {\it \"{U}ber eine Verallgemeinerung der Riemannschen Geometrie}, Schriftreihe des Forschinstituts f\"{u}̈r Mathematik 1(1957), Berlin, 33-84.
\bibitem{Sigurdsson}M. Andersson, M. Passare, R. Sigurdsson, {\it Complex convexity and analytic functionals}, Progress in Mathematics, vol. 225, Birkh\"{a}user Verlag, Basel, 2004.
\bibitem{Bonk}Z.M. Balogh, M. Bonk, {\it Gromov hyperbolicity and the Kobayashi metric on strictly pseudoconvex domains}, Comment. Math. Helv., 759(2000), no 3, 504-533.
\bibitem{Barth}T.J. Barth, {\it Convex domains and Kobayashi hyperbolicity}, Proc. Amer. Math. Soc. 79(1980), 556-558.
\bibitem{Bridson}M.R. Bridson, A. Haefliger, {\it Metric spaces of non-positive curvature}, Grundlehren der Mathematischen Wissenschaften, vol. 319, Springer-Verlag, Berling, 1999.
\bibitem{Fornaess}K. Diederich, J.E. Forn\ae ss, {\it Proper holomorphic maps onto pseudoconvex domains with real analytic boundary}, Ann. of Math., 110(1979), no. 3, 575-592.
\bibitem{Gromov}M. Gromov, {\it Hyperbolic groups.} Essays in group theory, 75-263, Math. Sci. Res. Inst. Publ., 8, Springer, New York, 1987.
\bibitem{paris}F. Haggui, A. Chrih, {\it On the Gromov hyperbolicity of certains domains in $\mathbb{C}^n,$}
C.R.Acad.Sci. Paris 355 (2017), 493-498.
\bibitem{Hormander}L. H\"{o}rmander, {\it Notions of Convexity}, Birkh\"{a}user, Basel, 1994.
\bibitem{Jarnicki}M. Jarnicki, P. Pflug, {\it Invariant distances and metrics in complex analysis},extended ed., de Gruyter Expositions in Mathematics, vol. 9, Walter de Gruyter $\&$ Co. KG, Berlin, 2013.
\bibitem{Kobayashi}S. Kobayashi, {\it Hyperbolic complex spaces}, Springer-Verlag, Berlin, 1998.
\bibitem{NTT} N. Nikolov, P.J. Thomas, M. Trybu\l a, {\it Gromov \textup{(}non\textup{)}hyperbolicity of certain domains in $\mathbb{C}^{2},$} Forum Math. 28(2016), no. 4, 783-794.
\bibitem{NT}N. Nikolov, M. Trybu\l a, {\it The Kobayashi balls of ($\mathbb{C}$-)convex domains}, Monatsh.
Math. 177 (2015), no. 4, 627-635.
\bibitem{NPZ} N. Nikolov, P. Pflug, W. Zwonek, {\it Estimates for invariant metrics on $\mathbb{C}$-convex domains}, Trans. Amer. Math. Soc. 2011(363), 6245-6256.
\bibitem{ZZ}S. V. Znamenskii, L. N. Znamenskaya, {\it Spiral connectedness of the sections and projections of $\mathbb{C}$-convex sets}, Math. Notes 59 (1996), 253-260.



\end{thebibliography}
\end{document}